\newtheorem{thm}{Theorem}[section]
\newtheorem{prop}[thm]{Proposition}
\newtheorem{coro}[thm]{Corollary}
\theoremstyle{definition}
\newtheorem{defn}[thm]{Definition}
\newtheorem{exm}[thm]{Example}
\newtheorem{rem}[thm]{Remark}
\newtheorem{prob}[thm]{Problem}
\title{Lickorish type construction of manifolds over simple polytopes}
\author{Zhi L\"u, Wei Wang and Li Yu}
\thanks{Supported in part by grants from NSFC (No. 11371093, No. 11371188, No. 11301335, No. 11401233, No. 11431009 and
No. 11661131004) and the PAPD (priority academic program development) of Jiangsu higher education
institutions.}
\address{School of Mathematical Sciences, Fudan University, Shanghai,
200433, P.R. China. } \email{zlu@fudan.edu.cn}
\address{College of Information Technology, Shanghai Ocean University, 999 Hucheng Huan Road, 201306, Shanghai, P.R.  China.}
\email{weiwang@amss.ac.cn}
\address{Department of Mathematics and IMS, Nanjing University, Nanjing, 210093, P.R.China}
\email{yuli@nju.edu.cn}
\date{} 
\begin{document}
\begin{abstract}
This paper is a survey on the Lickorish type construction of some kind of closed manifolds over simple convex polytopes. Inspired by Lickorish's theorem, we propose a method to describe certain families of manifolds over simple convex polytopes with torus action. Under this construction, many classical classification results of these families of manifolds could be interpreted by this construction and some further problems will be discussed.
\end{abstract}
\maketitle

\section{What is Lickorish type construction?}
This paper is a survey on the Lickorish type construction of some kind of closed manifolds over simple convex polytopes. We first explain what is ``Lickorish type construction''.

In algebra, it is natural to describe algebraic systems, such as rings and
 algebras, by generators and relations. In geometry and topology, it is
 often convenient to construct spaces from some very special examples by
certain type of operations. We write this construction in terms
of algebraic system by
$$ AS \, \big\{ \textbf{generators}\ |\ \textbf{some operations}
                \big\},$$
                where
	$AS$ is the abbreviation for ``algebraic system''.
One typical example is the following theorem obtained by Lickorish in 1962.

\begin{thm}[Lickorish~\cite{Lickorish}]\label{LickorishThm}
Any orientable closed connected $3$-manifold can be obtained from $S^3$ by a finite number of Dehn surgeries on knots.
\end{thm}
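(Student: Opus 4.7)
The plan is to reduce the statement to a theorem about the mapping class group of a closed orientable surface via Heegaard splittings, and then translate each generating Dehn twist into a Dehn surgery on a knot in $S^3$. Concretely, I would first recall that every orientable closed connected $3$-manifold $M$ admits a Heegaard splitting: there exist a genus $g \geq 0$ and handlebodies $H_1, H_2$ of genus $g$ together with an orientation-reversing homeomorphism $\phi : \partial H_2 \to \partial H_1$ such that $M = H_1 \cup_\phi H_2$; this follows from triangulating $M$ and thickening the $1$-skeleton and its dual. Since $S^3$ admits a genus $g$ Heegaard splitting $S^3 = H_1 \cup_{\phi_0} H_2$ for every $g$, one may choose the same handlebodies on both sides and write $M = H_1 \cup_{\phi_0 \circ \psi} H_2$ for some self-homeomorphism $\psi$ of $\Sigma_g := \partial H_2$. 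Thus producing $M$ from $S^3$ amounts to cutting $S^3$ along the common Heegaard surface and re-gluing by $\psi$.

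Next I would invoke Lickorish's twist theorem, which asserts that the mapping class group $\mathrm{Mod}(\Sigma_g)$ is generated by Dehn twists along a finite explicit family of simple closed curves on $\Sigma_g$ (the $3g-1$ Lickorish curves). Hence $\psi$ factors as a finite product $\tau_{c_n}^{\epsilon_n} \circ \cdots \circ \tau_{c_1}^{\epsilon_1}$ of Dehn twists with $\epsilon_i \in \{+1, -1\}$ along simple closed curves $c_i \subset \Sigma_g$. Finally, for each such curve $c$, pushing $c$ slightly off the Heegaard surface into one of the handlebodies gives a knot $K_c \subset S^3$ equipped with its surface framing, and a direct local computation shows that cutting $S^3$ along $\Sigma_g$ and re-gluing by $\tau_c^{\pm 1}$ is precisely the effect of $\pm 1$-surgery on $K_c$ with respect to this framing. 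Applying this conversion successively to the decomposition of $\psi$ (perturbing each new knot into general position so that the accumulated set forms an embedded link) yields a link $L \subset S^3$ with integral surgery coefficients whose surgery is homeomorphic to $M$.

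The main technical obstacle is the middle step: Lickorish's twist theorem is the deep input of the argument, requiring a delicate inductive argument showing that every self-homeomorphism of $\Sigma_g$ can be reduced to the identity after composition with finitely many twists along the standard curves. By contrast, the first and third steps are essentially geometric and become routine once the Heegaard viewpoint is adopted and one understands how a single surface Dehn twist modifies the gluing of two handlebodies. A minor secondary point to verify carefully is that the successive off-surface pushes in the third step can indeed be carried out so that the resulting knots are disjoint and each carries a well-defined integral framing; this is a standard general-position argument and does not affect the overall strategy.
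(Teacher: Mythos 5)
The paper states this theorem purely as a citation to Lickorish's 1962 article and gives no proof of it, so there is no in-paper argument to compare against; your proposal reproduces Lickorish's original strategy faithfully. The three-step outline---Heegaard splittings to reduce the statement to the mapping class group of $\Sigma_g$, the twist theorem to factor the gluing homeomorphism as a finite product of Dehn twists along the $3g-1$ standard curves, and the local observation that each twist is realized by a $\pm 1$-surgery on a pushed-off copy of the twisting curve with its surface framing---is exactly the argument in the cited source, and you correctly identify the twist theorem as the deep ingredient.

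One point in your last step should be tightened. Making the pushed-off copies of the curves disjoint by a ``general-position perturbation'' is not quite the right mechanism, because the order in which the pushed-off curves stack up near the Heegaard surface determines the order of composition of the corresponding twists, and a careless perturbation could permute that order and change the resulting manifold. The standard device is to fix a collar $\Sigma_g \times [0,1]$ of the Heegaard surface inside $S^3$ and push the $i$-th twisting curve to its own level $\Sigma_g \times \{t_i\}$ with $t_1 < t_2 < \cdots < t_n$ chosen to match the order in which the twists are composed in the factorization of $\psi$; this simultaneously produces an embedded framed link in $S^3$ and guarantees that the surgeries on its components, performed at once, reassemble to the composite regluing $\psi$. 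With that adjustment the outline is complete and is the argument the paper is implicitly invoking.
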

This theorem provides a global viewpoint of the construction of orientable closed connected $3$-manifolds under algebraic system with generators and operations.
We call this kind of construction or description \emph{Lickorish type construction}.
Under this point of view, we can rewrite the above theorem as:
 $$\big\{ \text{{\small All orientable closed connected $3$-manifolds}} \big\} =
 AS \big\{  S^3 \, |\, \text{Dehn surgeries on knots} \big\}.$$

 \vskip .4cm

It is natural to ask: {\em is there any other examples of this construction?}  This is the main motivation of this survey paper.
Under the viewpoint of Lickorish's construction, we survey some
known results of
constructing some families of closed manifolds arising in toric topology.
We hope that such constructions would be further studied with more applications. The reader is referred to~\cite{DJ} and~\cite{BP1} for the backgrounds of toric topology.

\subsection*{Acknowledgements}
The authors are very grateful to the referees for carefully reading this manuscript and providing a number of valuable and helpful suggestions, which led to this version.

\section{Closed manifolds over simple polytopes}
Let $P^n$ be an $n$-dimensional
simple convex polytope in an Euclidean space. Suppose the number of \emph{facets} (codimension-one faces) of $P^n$ is $m$.
According to~\cite{DJ}, one can construct a $T^m$-manifold $\mathcal{Z}_{P^n}$
and a $\mathbb{Z}_2^m$-manifold $\mathbb{R}\mathcal{Z}_{P^n}$ whose
orbit spaces are both $P^n$. Indeed,
let $\{F_1,\cdots, F_m\}$ be the set of facets of $P^n$.
Let $\{e_1,\cdots, e_m\}$ be an unimodular basis of
  $\mathbb{Z}^m$. Define a function
  $\lambda_0 :  \{ F_1,\cdots, F_m \}\rightarrow \mathbb{Z}^m$ by
    \begin{equation}\label{equ:Lambda_0}
     \lambda_0(F_i) = e_i, \ 1\leq i \leq m.
    \end{equation}
      For a proper face $f$ of $P^n$, let $G_f$ denote the subtorus of
      the $m$-dimensional \emph{real torus} $T^m$ determined by the set
       $\{ \lambda_0(F_i) \, |\, f\subset F_i \} \subset \mathbb{Z}^m$ under the exponential map
       $\mathbb{Z}^m \subset \mathbb{R}^m \rightarrow T^m$.
  For any point $p\in P^n$, let $f(p)$ denote the unique face of $P^n$
  that contains $p$ in
  its relative interior. Then by~\cite[Construction 4.1]{DJ},
 the \emph{moment-angle manifold} $\mathcal{Z}_{P^n}$ of $P^n$ is
  defined to be the following quotient space
    \begin{equation} \label{Equ:Real-Moment-Angle}
     \mathcal{Z}_{P^n} := P^n\times T^m \slash \sim
    \end{equation}
  where $(p,g) \sim (p',g')$ if and only if $p=p'$ and $g^{-1}g' \in G_{f(p)}$.  In addition, let
  $$\Theta_{P^n} :
  P^n\times T^m \rightarrow  \mathcal{Z}_{P^n}$$
  be the quotient map. There is a \emph{canonical
  $T^m$-action} on $\mathcal{Z}_{P^n}$ by
   \begin{equation}\label{Equ:Action}
    g' \cdot \Theta_{P^n}(p,g) = \Theta_{P^n}(p,g'g),\ p\in P^n, g,g'\in T^m.
   \end{equation}
  If we replace $T^m$ by the \emph{$\mathbb{Z}_2$-torus} $(\mathbb{Z}_2)^m$ and replace
   $\lambda_0$ by a function that maps $\{ F_1,\cdots, F_m \}$ to a basis of
   $(\mathbb{Z}_2)^m$, we can similarly define
    $\mathbb{R}\mathcal{Z}_{P^n} = P^n\times (\mathbb{Z}_2)^m\slash \sim$
   as~\eqref{Equ:Real-Moment-Angle} and a
    \emph{canonical $(\mathbb{Z}_2)^m$-action} on
   $\mathbb{R}\mathcal{Z}_{P^n}$ as~\eqref{Equ:Action}. \vskip .1cm

V.~Buchstaber (cf.~\cite{BP1}) defines $s_{\mathbb{C}}=s_{\mathbb{C}}(P^n)$ (or $s_{\mathbb{R}}=s_{\mathbb{R}}(P^n)$) to be the maximal dimension of the
subtorus of $T^m$ (or sub-$\mathbb{Z}_2$-torus of $(\mathbb{Z}_2)^m$) that can act freely on $\mathcal{Z}_{P}$ (or $\mathbb{R}\mathcal{Z}_{P}$) through the canonical action. It is easy to see that
$$ s_{\mathbb{C}}\leq m-n,\ \ s_{\mathbb{R}}\leq m-n. $$ \vskip .1cm

Note that $\mathcal{Z}_{P^n}$ and $\mathbb{R}\mathcal{Z}_{P^n}$ are the ``highest level'' manifolds over $P^n$. If we have a subtorus $H_{\mathbb{C}}$ of $T^m$ (or a sub-$\mathbb{Z}_2$-torus $H_{\mathbb{R}}\subset\mathbb{Z}_2^m$) that acts freely on $\mathcal{Z}_{P^n}$ (or $\mathbb{R}\mathcal{Z}_{P^n}$) where rank($H_{\mathbb{C}})\leqslant s_{\mathbb{C}}$ (rank($H_{\mathbb{R}})\leqslant s_{\mathbb{R}}$), we can obtain a $T^m/H_{\mathbb{C}}$-manifold $\mathcal{Z}_{P^n}/ H_{\mathbb{C}}$ (or
$\mathbb{Z}_2^m/H_{\mathbb{R}}$-manifold $\mathbb{R}\mathcal{Z}_{P^n}/ H_{\mathbb{R}}$) with orbit space $P^n$, called \emph{a partial quotient} of $\mathcal{Z}_{P^n}$ (or $\mathbb{R}\mathcal{Z}_{P^n}$). Therefore, one can construct a series of manifolds with real torus (or $\mathbb{Z}_2$-torus) actions whose orbit spaces are all $P^n$ as shown in the following picture.
\[
\setlength{\unitlength}{1381sp}%
\begingroup\makeatletter\ifx\SetFigFont\undefined%
\gdef\SetFigFont#1#2#3#4#5{%
  \reset@font\fontsize{#1}{#2pt}%
  \fontfamily{#3}\fontseries{#4}\fontshape{#5}%
  \selectfont}%
\fi\endgroup%
\begin{picture}(10749,8215)(-299,-7583)
\put(-299,-1261){\makebox(0,0)[lb]{\smash{{\SetFigFont{6}{7.2}{\rmdefault}{\mddefault}{\updefault}free}}}}
\thicklines
{\color[rgb]{0,0,0}\put(7090,-3961){\framebox(3000,4200){}}
}%
\put(3601,-7561){\framebox(4200,1800){}}
\put(3001,-4111){\vector( 4,-3){1872}}
\put(8401,-4111){\vector(-4,-3){1872}}
\put(2101,-136){\makebox(0,0)[lb]{\smash{{\SetFigFont{6}{7.2}{\rmdefault}{\mddefault}{\updefault}$\mathcal{Z}_P$}}}}
\put(2176,464){\makebox(0,0)[lb]{\smash{{\SetFigFont{5}{6.0}{\rmdefault}{\mddefault}{\updefault}${\Bbb C}$--case}}}}
\put(7651,464){\makebox(0,0)[lb]{\smash{{\SetFigFont{5}{6.0}{\rmdefault}{\mddefault}{\updefault}${\Bbb R}$--case}}}}
\put(7351,-211){\makebox(0,0)[lb]{\smash{{\SetFigFont{6}{7.2}{\rmdefault}{\mddefault}{\updefault}${\Bbb R}\mathcal{Z}_P$}}}}
\put(1876,-1936){\makebox(0,0)[lb]{\smash{{\SetFigFont{6}{7.2}{\rmdefault}{\mddefault}{\updefault}$\mathcal{Z}_P/H_{\Bbb C}$}}}}
\put(3616,-6736){\makebox(0,0)[lb]{\smash{{\SetFigFont{6}{7.2}{\rmdefault}{\mddefault}{\updefault}Simple convex polytopes $P$}}}}
\put(7426,-1036){\makebox(0,0)[lb]{\smash{{\SetFigFont{5}{6.0}{\rmdefault}{\mddefault}{\updefault}$\vdots$}}}}
\put(7201,-1936){\makebox(0,0)[lb]{\smash{{\SetFigFont{6}{7.2}{\rmdefault}{\mddefault}{\updefault}${\Bbb R}\mathcal{Z}_P/H_{\Bbb R}$}}}}
\put(7276,-2911){\makebox(0,0)[lb]{\smash{{\SetFigFont{5}{6.0}{\rmdefault}{\mddefault}{\updefault}$1\leq rk H_{\Bbb R}\leq s_{\Bbb R}(P)$}}}}
\put(2251,-1036){\makebox(0,0)[lb]{\smash{{\SetFigFont{5}{6.0}{\rmdefault}{\mddefault}{\updefault}$\vdots$}}}}
\put(1801,-2986){\makebox(0,0)[lb]{\smash{{\SetFigFont{5}{6.0}{\rmdefault}{\mddefault}{\updefault}$1\leq rk H_{\Bbb C}\leq s_{\Bbb C}(P)$}}}}
\put(10151,-1261){\makebox(0,0)[lb]{\smash{{\SetFigFont{6}{7.2}{\rmdefault}{\mddefault}{\updefault}free}}}}
\put(10100,-886){\makebox(0,0)[lb]{\smash{{\SetFigFont{5}{6.0}{\rmdefault}{\mddefault}{\updefault}$H_{\Bbb R}\curvearrowright{\Bbb R}\mathcal{Z}_P$}}}}
\put(7450,-5086){\makebox(0,0)[lb]{\smash{{\SetFigFont{5}{6.0}{\rmdefault}{\mddefault}{\updefault}$s_{\Bbb R}(P)$: Buchstaber invariant}}}}
\put(-180,-5086){\makebox(0,0)[lb]{\smash{{\SetFigFont{5}{6.0}{\rmdefault}{\mddefault}{\updefault}$s_{\Bbb C}(P)$: Buchstaber invariant}}}}
\put(-299,-961){\makebox(0,0)[lb]{\smash{{\SetFigFont{5}{6.0}{\rmdefault}{\mddefault}{\updefault}$H_{\Bbb C}\curvearrowright\mathcal{Z}_P$}}}}
{\color[rgb]{0,0,0}\put(1701,-3961){\framebox(3000,4200){}}
}%
\end{picture}%

\]
\vskip .3cm
 Note that any partial quotient of $\mathcal{Z}_{P^n}$ (or $\mathbb{R}\mathcal{Z}_{P^n}$) can be described by a nondegenerate $\mathbb{Z}^r$-coloring (or $(\mathbb{Z}_2)^r$-coloring) on $P^n$ with $n\leq r\leq m$ in the similar fashion as $\mathcal{Z}_{P^n}$ (or $\mathbb{R}\mathcal{Z}_{P^n}$). A
 \emph{nondegenerate $\mathbb{Z}^r$-coloring} (or \emph{$(\mathbb{Z}_2)^r$-coloring})
  on $P^n$ is a function
  $\mu: \{F_1,\cdots, F_m \} \rightarrow \mathbb{Z}^r$ (or $(\mathbb{Z}_2)^r$) such that
  at any vertex $v = F_{i_1}\cap \cdots \cap F_{i_n}$ of $P^n$, the
  set $\{ \mu(F_{i_1}),\cdots, \mu(F_{i_n}) \}$
   is part of a unimodular basis of $\mathbb{Z}^r$ (or
   $(\mathbb{Z}_2)^r$). Given any nondegenerate
   $\mathbb{Z}^r$-coloring $\mu$ on $P^n$, we can obtain
   an $(n+r)$-dimensional manifold $M(P^n,\mu)$ defined by:
   $$ M(P^n,\mu) = P^n\times T^r\slash\sim  $$
   where $(p,g)\sim (p',g')$ if and only if $p=p'$ and $g^{-1}g'$
   is in the subtorus of $T^r$ determined by the set
 $\{ \mu(F_i) \, |\, f\subset F_i \} \subset \mathbb{Z}^r$.
 There is a canonical action of $T^r$ on $M(P^n,\mu)$ whose orbit space
 is $P^n$. Similarly, given a nondegenerate
  $(\mathbb{Z}_2)^r$-coloring $\mu'$ on $P^n$, we can construct
  an $n$-dimensional manifold $M(P^n,\mu')$ with a canonical
   $(\mathbb{Z}_2)^r$-action whose orbit space is $P^n$. It is not hard to see that any partial quotient of $\mathcal{Z}_{P^n}$ (or $\mathbb{R}\mathcal{Z}_{P^n}$) can be realized as
   $M(P^n,\mu)$ by some nondegenerate $\mathbb{Z}^r$-coloring (or
    $(\mathbb{Z}_2)^r$-coloring) $\mu$ on $P^n$. \vskip .1cm

Inspired by Lickorish's theorem, we propose the following problem.
\begin{prob}
Give a Lickorish type construction for all the partial quotients of
 (real) moment-angle manifolds over simple polytopes described above.
In other words, find some generators and operations that can produce
all such kind of closed manifolds.
\end{prob}
	
In particular, if $T^{m-n}$ (or $(\mathbb{Z}_2)^{m-n}$) can act freely on
	$\mathcal{Z}_{P^n}$ (or $\mathbb{R}\mathcal{Z}_{P^n}$) through the canonical action, the quotient space $\mathcal{Z}_{P^n}\slash T^{m-n}$ (or
	$\mathbb{R}\mathcal{Z}_{P^n}\slash (\mathbb{Z}_2)^{m-n}$) is called a
	\emph{quasitoric manifold} (or a \emph{small cover}) over $P^n$.
	The nondegenerate $\mathbb{Z}^n$-coloring (or
	$(\mathbb{Z}_2)^n$-coloring) on the facets of $P^n$ corresponding to a quasitoric manifold (or a small cover) is also called its
	\emph{characteristic function}.
	These manifolds are introduced by Davis-Januszkiewicz~\cite{DJ}
	as analogues of smooth projective toric variety in the category of closed manifolds with real torus and $\mathbb{Z}_2$-torus actions. In this case, it is also interesting to study the Lickorish type construction of quasitoric manifolds and small covers.

Roughly speaking, these manifolds over simple convex polytopes can be determined by some informations of their polytopes. If these polytopes admits some Lickorish type constructions, then these corresponding manifolds will also admit the induced Lickorish type construction. Therefore, we will discuss the Lickorish type construction of simple convex polytopes in the next section first.\\

\section{Operations on polytopes}
\subsection{Simple polytopes and flips}
One has a suitable ``algebraic system'' to describe all simple polytopes
in any dimension as follows. First of all, let us
 recall the definition of flips on a simple polytope.

     Let $P$ be an $n$-dimensional simple polytope with $m$ facets.
Assume there exists a face $f$ that is a simplex of dimension $a-1$ and let $b=n+1-a$.
One can define the \emph{flip} on $P$ at $f$ as follows (called a
 \emph{flip of type $(a, b)$}).
\begin{itemize}
  \item Let $\Delta^{a-1}=[v_1,\cdots,v_a]$ be a simplex of dimension $a-1$ in
  an $n$-simplex $\Delta^n=[v_1,\cdots, v_{n+1}]$ and
         $\Delta^{b-1}=[v_{a+1},\cdots,v_{n+1}]$ be opposite face of $\Delta^{a-1}$
         in $\Delta^n$.
        The set of points $\{ \frac{1}{2}v_i + \frac{1}{2}v_j \,|\, 1\leq i \leq a,
            a+1\leq j \leq n+1 \}$ spans a hyperplane $L$ which intersect $\Delta^n$
            transversely. Let $H_{a,b}=L\cap \Delta^n$. It is easy to see that
            $H_{a,b}$ is a simple $(n-1)$-polytope
             combinatorially equivalent to the product $\Delta^{a-1}\times \Delta^{b-1}$.
             We have $\Delta^n = (\Delta^{a-1} \diamond H_{a,b}) \cup (\Delta^{b-1}
             \diamond H_{a,b})$ where
             $A \diamond B$ is the convex hull of two sets $A$ and $B$.

	\item Choose a hyperplane to cut off a small neighborhood $N(f)$ of $f$ in $P$
	which is combinatorially equivalent to $\Delta^{a-1} \diamond H_{a,b}$.

	\item Define the flip on $P$ at $f$ be
	\begin{align*}
	\mathrm{flip}_{f}(P)=& (P-N(f))\cup (\Delta^n- \Delta^{a-1} \diamond H_{a,b} )\\
	=&(P-N(f))\cup (\Delta^{b-1} \diamond H_{a,b})
	\end{align*}	
\end{itemize}

   It is shown in~\cite[Corollary 2.7]{BM} that the combinatorial type of
   $\mathrm{flip}_{f}(P)$ is uniquely determined by
  $P$ and $f$.  We also use
   $\digamma_{(a, b)}$ to refer to a general flip of type $(a,b)$ on a
   simple polytope.

 \begin{exm}
     Doing a flip on a simple $n$-polytope $P$ at a vertex $v$ (i.e. a flip of type
     $(1,n)$) is equivalent to ``cutting off'' $v$ from $P$.
     In addition, the flip of the simple $3$-polytope $P$ in Figure~\ref{p:flip-prism}
     at the $1$-simplex $f$ gives us a polytope combinatorially equivalent to the $3$-cube.
   \end{exm}

   \begin{figure}
         \includegraphics[width=0.73\textwidth]{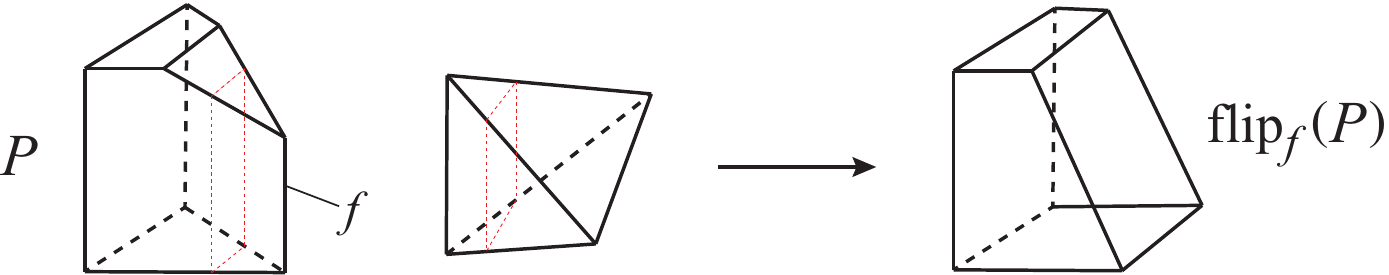}\\
          \caption{Flip at a $1$-simplex}\label{p:flip-prism}
      \end{figure}

 It is shown in~\cite[Lemma 2.3]{BM} that \textsl{up to combinatorial equivalence}
   any simple $n$-polytope can be obtained
   from the $n$-simplex by a finite number of flips.
  We can restate their theorem as follows:
\begin{thm}[Bosio--Meersseman~\cite{BM}] \label{Thm:BM-flips}
\ \vskip .1cm
$\big\{$All simple $n$-polytopes$\big\}/\sim_C \subseteq AS\big\{\Delta^n\big|\,\text{\rm flips}\ \digamma_{(a, b)}, a+b=n+1, 1\leq a,b\leq n\big\}$,
where the equivalent relation $\sim_C$ is up to combinatorial equivalence.
\end{thm}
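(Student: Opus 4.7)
The plan is a Morse-theoretic cobordism argument. Given a simple $n$-polytope $P$, I would realize $P$ as a generic level set of a generic affine functional $\phi$ on an auxiliary simple $(n{+}1)$-polytope $Q \subset \mathbb{R}^{n+1}$, arranged so that another generic level set of $\phi$ on $Q$ is combinatorially equivalent to $\Delta^n$. Sweeping $\phi$ between the two levels produces a one-parameter family of $n$-dimensional slices $Q_t := Q \cap \phi^{-1}(t)$, each a simple $n$-polytope for regular $t$, whose combinatorial type is locally constant and changes only when $t$ crosses a critical value of $\phi|_Q$, i.e.\ a vertex of $Q$. The goal is to identify each such change with a single flip and then concatenate.

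The key local claim is that at each critical vertex $v$ of $Q$ the slice $Q_t$ undergoes exactly one flip $\digamma_{(a,b)}$ with $a + b = n + 1$. Simplicity of $Q$ makes a neighborhood of $v$ affinely equivalent to a cone $\{v + \sum_{i=1}^{n+1} c_i u_i : c_i \geq 0\}$ with linearly independent edge directions $u_1, \dots, u_{n+1}$. Genericity of $\phi$ gives a partition $\{1, \dots, n{+}1\} = A \sqcup B$ according to the sign of $\phi(u_i)$, with $|A| = a$ and $|B| = b = n{+}1 - a$. Comparing the cone intersections $\phi^{-1}(t)$ just above and just below $\phi(v)$, one finds that one slice carries a tiny $(a{-}1)$-simplex face (where the $A$-side facets of $Q$ at $v$ converge) while the other carries a tiny $(b{-}1)$-simplex face (where the $B$-side facets converge), joined across the transition by the common cross-section $H_{a,b} \cong \Delta^{a-1} \times \Delta^{b-1}$. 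This matches exactly the local model $\Delta^n = (\Delta^{a-1} \diamond H_{a,b}) \cup (\Delta^{b-1} \diamond H_{a,b})$ used to define the flip, so crossing $v$ effects a $\digamma_{(a,b)}$ (or its inverse $\digamma_{(b,a)}$ in the reverse sweep direction).

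For the construction of $Q$, a convenient recipe is to realize $P = \{x \in \mathbb{R}^n : \langle \alpha_i, x \rangle \leq \beta_i\}$ and set $Q := \{(x, y) \in \mathbb{R}^{n+1} : 0 \leq y \leq 1,\ \langle \alpha_i, x \rangle \leq \beta_i + R_i y\}$ for constants $R_i$ large enough that at $y$ close to $1$ only $n+1$ of the slanted halfspaces remain active, cutting out a combinatorial simplex. Then $Q \cap \{y = 0\} = P$, slices near $y = 1$ are combinatorially $\Delta^n$, and after a small generic perturbation of the $R_i$ and of $\phi$ around the height $\phi(x, y) = y$, the polytope $Q$ is simple and $\phi|_Q$ is Morse with pairwise distinct critical values. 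Chaining the local flips at the finitely many critical vertices in the sweep then exhibits $P$, up to combinatorial equivalence, as a sequence of flips starting from $\Delta^n$. The main obstacle is precisely the joint genericity step: making $Q$ simple, making $\phi|_Q$ Morse with exactly one critical vertex per level, and making the top level set genuinely a simplex, all while keeping $P$ as the bottom level set. This is a delicate but standard perturbation argument and forms the technical heart of the proof; once it is in place the flip identification at each critical vertex reduces to the simple-cone computation above.
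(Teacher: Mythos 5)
Your overall strategy---sweep a generic affine functional across an auxiliary simple $(n+1)$-polytope $Q$ whose slices interpolate between $P$ and $\Delta^n$, and identify each critical-vertex crossing with a single flip---is sound, and the local computation at a critical vertex in your second paragraph is essentially correct. (The paper itself offers no proof: it simply cites Lemma~2.3 of Bosio--Meersseman, whose argument is of exactly this Morse-theoretic type, so your approach matches the source.)

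However, the specific construction of $Q$ in your third paragraph does not work. You set $Q = \{(x,y): 0\le y\le 1,\ \langle\alpha_i,x\rangle\le\beta_i+R_iy\}$ and claim that for large $R_i$ the slice near $y=1$ is a combinatorial simplex. But every slice $Q_t=\{x:\langle\alpha_i,x\rangle\le\beta_i+R_it\}$ is cut out by halfspaces with the \emph{same} normals $\alpha_1,\dots,\alpha_m$ as $P$; only the offsets change. Such a polytope can become a simplex only if some $n+1$ of the $\alpha_i$ positively span $\mathbb{R}^n$ and the remaining constraints can be rendered redundant, which already fails for $P$ a square: the four normals $\pm e_1,\pm e_2$ never cut out a triangle, whatever the offsets, so every slice of your $Q$ is a rectangle. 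Moreover, perturbing the $R_i$ and the functional $\phi$ as you suggest does not rescue this: perturbing the $R_i$ destroys the identity $Q\cap\{y=0\}=P$, while perturbing $\phi$ means the level set near the maximum of $\phi$ (which \emph{is} a simplex, being a vertex figure of the simple $Q$) is no longer the slice near $y=1$; you have no level set that is simultaneously guaranteed to be $P$.

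The repair is simple and avoids all the perturbation delicacies you flag as the technical heart: take $Q=P\times[0,1]$, which is already simple, and sweep with $\phi(x,y)=y+\langle c,x\rangle$ for a small generic $c\in\mathbb{R}^n$. The level set $\phi^{-1}(t)\cap Q$ is affinely identified with $P\cap\{t-1\le\langle c,x\rangle\le t\}$; for $t$ in the middle range both cuts are vacuous and the slice equals $P$, while for $t$ just above $\min_{v\in\mathrm{vert}(P)}\langle c,v\rangle$ (resp.\ just below $1+\max_v\langle c,v\rangle$) the slice is the vertex figure of the minimizing (resp.\ maximizing) vertex of $Q$, hence a simplex. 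The critical values of $\phi|_Q$ are $\{\langle c,v\rangle\}\cup\{1+\langle c,v\rangle\}$ over vertices $v$ of $P$, pairwise distinct for generic small $c$, and your local-cone analysis at each crossing then applies verbatim to produce the flip sequence from $\Delta^n$ to $P$.
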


We warn that the class of simple polytopes is not closed under
   ``combinatorial flips''. Indeed, the result of cutting off a neighborhood of a
simplicial face $f$ of a simple polytope $P$ and gluing the neighborhood of another simplex in
its place may not be a convex polytope (see~\cite[Example 2.10]{BM}).

\subsection{Polytopal spheres and bistellar moves}
Recall that a \emph{polytopal sphere} is defined to be the boundary of a simplicial polytope. A \emph{PL sphere} is a simplicial sphere $K$
which is PL homeomorphic to the boundary of a simplex (i.e.
there is a subdivision of $K$ isomorphic to a subdivision of the boundary of a simplex). Polytopal spheres are all
PL spheres. But there exist non-polytopal PL spheres in dimension $\geq 3$ (e.g. Br\"uckner sphere).
The following operations called ``bisetellar moves'' can
help us to obtain new PL spheres from any given one.
\begin{defn}
Let $K$ be a simplicial $q$-manifold (or any pure $q$-dimensional simplicial complex) and $\sigma \in K$ be
a $(q-i)$-simplex $(0 \leqslant   i \leqslant   q) $ such that $link_K\sigma$  is the boundary $\partial \tau$ of an $i$-simplex $\tau$ that is not a face of $K$. Then the operation $\chi_{\sigma}$ on $K$ defined by
$$\chi_{\sigma}(K):=K \setminus(\sigma* \partial \tau) \cup (\partial \sigma *\tau)$$
is called a bistellar $i$-move.
\end{defn}

  A $q$-dimensional \emph{PL manifold}
   is a simplicial complex $K$ such that
  the link of every nonempty simplex $\sigma$ in $K$ is a PL sphere of dimension
  $q-\dim(\sigma)-1$. Two PL manifolds $K$ and $K'$ are called  \emph{bistellarly equivalent} if we can obtain $K'$
   from $K$ by a finite
  sequence of bistellar moves.
It is easy to see that two bistellarly equivalent PL manifolds
	are PL homeomorphic. The following remarkable result shows that
	the converse is also true.\vskip .1cm
	
	\begin{thm}[Pachner~\cite{Pa87,Pa91}]
	Two PL manifolds are bistellarly equivalent if and only if
	they are PL homeomorphic.
	\end{thm}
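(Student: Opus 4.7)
The forward direction (bistellar equivalence implies PL homeomorphism) was already noted in the text, and is immediate: a single bistellar move $\chi_\sigma$ replaces $\sigma*\partial\tau$ by $\partial\sigma*\tau$, and these are the two canonical triangulations of the same PL ball $\sigma*\tau$ sharing the common boundary $\partial(\sigma*\tau)$, so each move extends to a PL homeomorphism $K\to\chi_\sigma(K)$. The real content of the theorem lies in the converse. My plan is to reduce the converse to a purely local statement by invoking Alexander's classical ``stellar theorem'': two simplicial complexes are PL homeomorphic if and only if there is a finite sequence of elementary stellar subdivisions (starrings) and inverse starrings transforming one into the other. Granting this, Pachner's theorem reduces to the following lemma: if $K$ is a PL $q$-manifold and $K'$ is obtained from $K$ by elementary starring at a simplex $\sigma\in K$, then $K$ and $K'$ are bistellarly equivalent.

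The starring lemma has two easy cases and one hard case. When $\dim\sigma=q$, one has $link_K\sigma=\emptyset=\partial(\text{new vertex})$, so the starring is literally a bistellar $0$-move. When $\sigma$ is a vertex whose link happens to be the boundary of some $q$-simplex $\tau\notin K$, the inverse starring is a bistellar $q$-move. The hard case is intermediate $\dim\sigma=k$ with $0<k<q$, where $link_K\sigma$ is a PL $(q-k-1)$-sphere that is typically not the boundary of a single simplex, so no single bistellar move suffices. Here I would argue by a double induction on $q$ and on $k$: the inductive hypothesis yields Pachner's theorem for PL spheres of dimension $q-k-1$, and hence a sequence of bistellar moves transforming $link_K\sigma$ into $\partial\tau$ for some $(q-k)$-simplex $\tau$. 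The crucial mechanism is that each bistellar move on a simplex $\rho\in link_K\sigma$ can be ``coned off'' to a bistellar move at $\sigma*\rho\in K$, since the link of $\sigma*\rho$ in $K$ equals the link of $\rho$ in $link_K\sigma$. After performing these coned moves, the star of $\sigma$ has the form $\sigma*\partial\tau$, and one final bistellar move accomplishes the desired starring.

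The hard part will be the bookkeeping inherent in the coning step and in the induction. One must verify: that each coned move genuinely satisfies the bistellar hypothesis in the ambient complex (in particular that the ``new'' simplex $\sigma*\tau$ does not already lie in $K$); that the intermediate complexes remain PL manifolds so the inductive hypothesis continues to apply; and that the double induction is well-founded, which requires handling Pachner's theorem for PL spheres as a separate base case since the argument feeds back on itself in lower dimensions. Once this technical verification is in place, the starring lemma follows, and combining it with Alexander's stellar theorem yields Pachner's result.
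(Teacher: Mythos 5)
The paper does not prove this theorem --- it is stated as a cited result from Pachner's two papers, with only the easy direction (bistellar moves are PL homeomorphisms) remarked upon in the surrounding text. So there is no ``paper's proof'' to compare against, and you are being judged on whether your outline of the hard direction is sound. Your overall strategy --- invoke Alexander's stellar theorem and reduce to showing that an elementary starring of a PL manifold is achievable by bistellar moves, via a double induction and coning moves on the link --- is a legitimate and well-known route; it is, broadly speaking, the approach taken in Lickorish's 1999 expository account of Pachner's theorem. It is worth noting, though, that it is \emph{not} the route of the cited papers: Pachner's own 1991 proof goes through \emph{elementary shellings} rather than the Alexander--Newman stellar theory.

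More importantly, the final step of your outline contains a genuine gap, and it is not the ``bookkeeping'' you flag. After the coned moves you have a complex $K_1$, bistellarly equivalent to $K$, with $\mathrm{link}_{K_1}\sigma=\partial\tau$, and you assert that ``one final bistellar move accomplishes the desired starring.'' But the bistellar move $\chi_\sigma$ replaces $\sigma*\partial\tau$ by $\partial\sigma*\tau$; it introduces the simplex $\tau$ and contains no new vertex. The starring at $\sigma$, by contrast, replaces $\sigma*\partial\tau$ by $v*\partial\sigma*\partial\tau$ for a brand-new vertex $v$. These are two \emph{different} triangulations of the same ball, and for $0<\dim\sigma<q$ one checks easily that $\partial\sigma*\partial\tau$ is not the boundary of a simplex (it has $q+2$ vertices), so the unstarring at $v$ is itself not a single bistellar move. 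Hence $\chi_\sigma(K_1)$ is not the starring of $K$, and you have not shown the two are bistellarly equivalent --- that is precisely the content of the lemma you are trying to prove, now reappearing in miniature. Bridging this final ball-retriangulation step (comparing $\partial\sigma*\tau$ with $v*\partial\sigma*\partial\tau$ rel boundary) requires a further argument, and it is where the real technical work in this approach lies. You should either supply that comparison or follow a source (e.g.\ Lickorish's 1999 notes) that does so carefully, keeping in mind that the coned moves must also be tracked on the starred side of the comparison, not only on $K$.
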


Since a simple polytope $P$ and its dual simplicial polytope $P^*$ determine each other, we have
the following one-to-one correspondence:
	$$\{\text{simple polytopes $P$}\}\longleftrightarrow \{\text{polytopal spheres $\partial P^*$}\}.$$
	 It is easy to see that for any $1\leq a \leq n$,
     a $(a,b)$-type flip on $P$ corresponds to
    a bistellar $(a-1)$-move on $\partial P^*$.
    Indeed, any proper face $f$ of $P$ determines a unique
	simplex of $\partial P^*$ denoted by $\sigma_f$, where $\dim(\sigma_f)=n-\dim(f)-1$.
	Then for any simplicial face $f$ of $P$,
	$\chi_{\sigma_f}(\partial P^*)$ corresponds to $\mathrm{flip}_f(P)$.
    So we have the correspondence:
	$$\{\text{filps on simple $n$-polytopes}\}\longleftrightarrow
	\{\text{bistellar moves on polytopal $(n-1)$-spheres}\}.$$
	
	By Pachner's theorem, all the simplicial complexes obtained from
 bistellar moves on $\partial \Delta^n$ are exactly PL $(n-1)$-spheres.
 So we have the following algebraic system of Lickorish type construction of
 all polytopal simplicial spheres.
$$\{ \text{All polytopal $(n-1)$-spheres} \}\subseteq \text{PL  $(n-1)$-spheres} = AS\{\partial\Delta^n\big|\,\text{bistellar moves} \}$$\vskip .1cm

 Moreover, the following theorem
    implies that we do not need
   flips of type $(n,1)$ to obtain a simple $n$-polytope
   $P$ from the $n$-simplex $\Delta^n$.

 \begin{thm}[see~\cite{Ewald78}] \label{Thm:Ewald}
     Let $P$ be simple polytope of dimension $n\geq 3$.
     Then there is a sequence of simple polytopes
     $P_1,\cdots, P_m$ such that $P_1=\Delta^n$, $P_m=P$
     and for $i=1,\cdots,m-1$,
      $\partial P^*_{i+1}$ is obtained from
     $\partial P^*_i$ by a bistellar $k$-move with
     $0\leq k \leq n-2$.
   \end{thm}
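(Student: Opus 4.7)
The plan is to refine Theorem~\ref{Thm:BM-flips} (Bosio--Meersseman), which already provides \emph{some} flip sequence from $\Delta^n$ to $P$, by eliminating the flips of type $(n,1)$. In the dual picture of polytopal $(n-1)$-spheres (via the correspondence $P\leftrightarrow\partial P^*$ described earlier), a flip of type $(n,1)$ on $P_i$ is precisely a bistellar $(n-1)$-move on $\partial P_i^*$, namely the deletion of a vertex $v$ whose link is $\partial\tau$ for some $(n-1)$-simplex $\tau$. Thus the task becomes producing a sequence of bistellar $k$-moves with $k\leq n-2$ from $\partial\Delta^n$ to $\partial P^*$, passing through intermediate polytopal $(n-1)$-spheres.

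First I would invoke Theorem~\ref{Thm:BM-flips} to obtain an initial flip sequence $\Delta^n = P_1\to\cdots\to P_m = P$, and then induct on the number of bistellar $(n-1)$-moves (equivalently, $(n,1)$-flips) that it contains. The key local replacement step is the following: whenever an $(n-1)$-move appears at some intermediate stage $\partial P_i^*\to\partial P_{i+1}^*$ at a vertex $v$ with $\mathrm{link}(v)=\partial\tau$, one shows that it can be traded for a block of bistellar $k$-moves with $k\leq n-2$ supported in a small combinatorial neighborhood of $v$ and $\tau$ that yield the same net combinatorial change. In the polytope picture, this corresponds to performing $(a,b)$-flips with $1\leq a\leq n-1$ near the simplicial facet that the $(n,1)$-flip would otherwise delete, reshaping its neighborhood so as to achieve the same transition without ever removing a facet by a $(n,1)$-flip. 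Iterating this replacement drives the count of $(n-1)$-moves down to zero.

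The main obstacle is the \emph{polytopality} of the intermediate complexes. Bistellar moves preserve only the PL sphere type (by Pachner's theorem), and there exist PL spheres that are not polytopal, e.g.\ the Br\"uckner sphere mentioned above. Hence each intermediate complex in the modified sequence must be certified to be the boundary of a simplicial polytope, equivalently realized as a simple convex polytope. This is the principal difficulty: Ewald's argument in~\cite{Ewald78} handles it by explicit projective/affine constructions of the intermediate realizations, ensuring convexity at every step rather than merely the PL sphere property. The hypothesis $n\geq 3$ ensures that enough room is available for these local geometric replacements in the neighborhood of $\tau$.
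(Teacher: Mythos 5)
The paper itself gives no proof of this statement — it is cited as Ewald's result~\cite{Ewald78} and used as a black box — so the comparison can only be between your sketch and what the cited reference must actually establish.

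The central step of your plan cannot work, and the obstruction is elementary. In the dual picture, bistellar moves on a simplicial $(n-1)$-sphere change the vertex count in a fixed way: a bistellar $0$-move (stellar subdivision of a facet) increases the number of vertices by one, a bistellar $(n-1)$-move (deletion of a vertex whose link is $\partial\tau$) decreases it by one, and every bistellar $k$-move with $1\le k\le n-2$ leaves the vertex count unchanged. Hence a single $(n-1)$-move can never be ``traded for a block of bistellar $k$-moves with $k\le n-2$ that yield the same net combinatorial change'': any such block weakly increases the vertex count, while the $(n-1)$-move strictly decreases it. The local replacement you posit is therefore impossible, regardless of polytopality concerns, and induction on the number of $(n-1)$-moves in a Bosio--Meersseman sequence does not get off the ground.

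What the theorem really asserts is that one can build a \emph{vertex-monotone} flip sequence from $\partial\Delta^n$ to $\partial P^*$ from scratch, never removing a vertex along the way; this is the content of Ewald's argument, which constructs the intermediate polytopes geometrically rather than by repairing an arbitrary pre-existing sequence. (Note also that invoking Theorem~\ref{Thm:BM-flips} as a starting point is logically unnecessary and historically backwards: Bosio--Meersseman's Lemma~2.3 is from 2006, Ewald's result is from 1978, and Corollary~\ref{Cor:less-Flips} in the paper is \emph{derived} from Theorem~\ref{Thm:Ewald}, not the other way around.) Your last paragraph correctly identifies polytopality of the intermediate spheres as a genuine difficulty, but the deeper gap is the misconceived local-replacement step; even without the polytopality issue, the combinatorics alone rule it out.
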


So we have the following corollary, up to combinatorial equivalence

  \begin{coro} \label{Cor:less-Flips}
   $\big\{$All simple $n$-polytopes$\big\}/\sim_C\subseteq AS\big\{\Delta^n\big|\,\text{\rm flips}\ \digamma_{(a, b)}, a+b=n+1, 1\leq a \leq n-1, 2\leq b \leq n \big\}$.
  \end{coro}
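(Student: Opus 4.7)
The plan is to deduce Corollary~\ref{Cor:less-Flips} as a direct translation of Ewald's Theorem~\ref{Thm:Ewald} across the correspondence
$$\{\text{flips on simple $n$-polytopes}\}\longleftrightarrow
\{\text{bistellar moves on polytopal $(n-1)$-spheres}\}$$
that has just been established. The central observation is that the exponent ranges $1\leq a\leq n-1$, $2\leq b\leq n$ in the statement are the image, under this correspondence, of Ewald's range $0\leq k\leq n-2$ on bistellar $k$-moves; so the content of the corollary is essentially already proved.

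First, I would take an arbitrary simple $n$-polytope $P$ with $n\geq 3$ and apply Theorem~\ref{Thm:Ewald} to produce a sequence $P_1=\Delta^n,\,P_2,\ldots,P_m=P$ such that $\partial P^*_{i+1}$ is obtained from $\partial P^*_i$ by a bistellar $k_i$-move with $0\leq k_i\leq n-2$. By the identification $\chi_{\sigma_f}(\partial P^*)\leftrightarrow\mathrm{flip}_f(P)$ recorded above, each such bistellar $k_i$-move on the dual sphere is the dual of a flip on $P_i$ of type $(a_i,b_i)$ with $a_i=k_i+1$ and $b_i=n+1-a_i=n-k_i$. The constraint $0\leq k_i\leq n-2$ translates exactly to
$$1\leq a_i\leq n-1,\qquad 2\leq b_i\leq n,$$
so every step in the chain is effected by a flip of the type permitted in the statement. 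Concatenating, we obtain $P$ from $\Delta^n$ by flips of the required types, up to combinatorial equivalence, which yields the asserted inclusion.

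The low-dimensional cases are trivial: for $n=1$ the only simple $1$-polytope is $\Delta^1$, and for $n=2$ the allowed flips reduce to type $(1,2)$, i.e.\ vertex truncations, which suffice to produce every convex polygon from a triangle. The main point one has to be careful about, rather than an obstacle, is that the correspondence $\mathrm{flip}_f\leftrightarrow\chi_{\sigma_f}$ is genuinely invertible at the combinatorial level, so that Ewald's chain on the dual side really lifts to a chain of \emph{flips on simple polytopes} (as opposed to merely producing a sequence of PL spheres); this is immediate since a simple polytope is combinatorially determined by its dual simplicial sphere and flips are defined purely in terms of face data. Apart from this bookkeeping, the corollary is a formal consequence of Theorem~\ref{Thm:Ewald}.
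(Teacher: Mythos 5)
Your proof is correct and follows exactly the route the paper intends: the corollary is presented as an immediate consequence of Ewald's Theorem~\ref{Thm:Ewald} combined with the flip/bistellar-move dictionary (bistellar $k$-move $\leftrightarrow$ flip of type $(k+1, n-k)$), and your index translation $0\leq k\leq n-2 \Leftrightarrow 1\leq a\leq n-1,\ 2\leq b\leq n$ is precisely the content. Your explicit treatment of $n=1,2$ (which lie outside the hypothesis $n\geq 3$ of Ewald's theorem) is a worthwhile detail the paper leaves implicit, and your remark that Ewald's chain consists of genuine simple polytopes -- so one never leaves the polytopal world despite simple polytopes not being closed under arbitrary combinatorial flips -- correctly identifies why the translation is licit.
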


From the point of view of surgery, flips and bistellar moves are some sort of combinatorial surgeries.  Analogy to Dehn surgeries in
Lickorish's Theorem \ref{LickorishThm}, these two kinds of operations are concrete and constructive. From finite concrete generators, one can use these two kind of operations to construct all objects in the above sets of combinatorial classes.
\begin{rem}
It is known that for a compact PL manifold $M$, the differential structure on $M$ is determined by the homotopy set $[M,PL/O]$. Since $\pi_n(PL/O)=0,n<7$,  it follows that if $M^n$ is a PL $n$-sphere with $n<7$, there is a one-to-one correspondence between PL structures on $M^n$ and smooth structures on $M^n$. The most interesting case is in dimension 4, in this case the classification of PL structure on $S^4$ is equivalent to the classification of smooth structure on $S^4$.

On the other hand, bistellar move doesn't change PL structure. So it may be interesting to find some kind of ``bistellar move" invariants on $S^4$.

\end{rem}
\section{Lifting operations on $P$ to
 $\mathcal{Z}_P$ and $\mathbb{R}\mathcal{Z}_P$}
In section 2, we introduce two families of manifolds over a simple polytope $P$ equipped with some special actions of real torus and $\mathbb{Z}_2$-torus. It is natural to consider the question of lifting the operations on $P$ to these manifolds.
First, let's consider the surgery on ``the highest level''
 $\mathcal{Z}_P$ and $\mathbb{R}\mathcal{Z}_P$.

Suppose $P$ is a simple $n$-polytope with $m$ facets.
Bosio-Meersseman~\cite{BM} describes the equivariant surgery
$\widetilde{\digamma}^{\mathbb{C}}_{(a, b)}$ on $\mathcal{Z}_P$ corresponding to a $(a,b)$-type flip $\digamma_{(a, b)}$ on $P$ at a simplicial face $f$ where $\dim(f)=a-1$.
Let $\pi_{\mathbb{C}}: \mathcal{Z}_P \rightarrow P$ be the orbit map of the canonical $T^m$-action on $\mathcal{Z}_P$. For a small
neighborhood $N(f)$ of $f$ in $P$, it is easy to see that
$\pi^{-1}_{\mathbb{C}}(N(f))\cong S^{2a-1}\times D^{2b}\times T^{m-n-1}$, where $T^{m-n-1}$ are determined by
those facets of $P$ which have no intersection with $f$.
So when removing $N(f)$ from $P$ and glue back $\Delta^{b-1} \diamond H_{a,b}\subset \Delta^n$ in the flip, the corresponding equivariant surgery on $\mathcal{Z}_P$ is given by:
$$\widetilde{\digamma}^{\mathbb{C}}_{(a, b)}(\mathcal{Z}_P)=
\begin{cases}
 \big(\mathcal{Z}_P\setminus (S^{2a-1}\times D^{2b}\times T^{m-n-1})\big)& \\
\cup \big(D^{2a}\times S^{2b-1}\times T^{m-n-1}\big), & \text{ if } a\not=1,n;\\
 \big((\mathcal{Z}_P\times S^1)\setminus(S^1\times D^{2n}\times T^{m-n})\big)&\\
\cup \big(D^2\times S^{2n-1}\times T^{m-n}\big),& \text{ if } a=1;\\
 \Big( \big(\mathcal{Z}_P \setminus (D^2\times S^{2n-1}\times T^{m-n})\big) \cup &\\
 (S^1\times D^{2n}\times T^{m-n}) \Big) \slash S^1,              & \text{ if } a=n.
\end{cases}$$
Note that the $a=n$ case is the converse operation of the $a=1$ case, and
the quotient $\slash S^1$ corresponds to the fact that a simplicial facet is shrunk to a vertex.
 These operations on $\mathcal{Z}_P$ are also given in Buchstaber-Panov~\cite[\S 6.4]{BP1}. \vskip .1cm
 Similarly, let $\pi_{\mathbb{R}}: \mathbb{R}\mathcal{Z}_P \rightarrow P$ be the orbit map of the canonical $(\mathbb{Z}_2)^m$-action on $\mathbb{R}\mathcal{Z}_P$. For a small
neighborhood $N(f)$ of $f$ in $P$, $\pi^{-1}_{\mathbb{R}}(N(f))\cong S^{a-1}\times  D^{b} \times (S^0)^{m-n-1}$.
the equivariant surgery
$\widetilde{\digamma}^{\mathbb{R}}_{(a, b)}$ on $\mathbb{R}\mathcal{Z}_P$ corresponding to the flip of $P$ at $f$ is given by
\[  \widetilde{\digamma}^{\mathbb{R}}_{(a, b)} (\mathbb{R}\mathcal{Z}_{P}) =
 \begin{cases}
\big(\mathbb{R}\mathcal{Z}_{P} \backslash
	     (S^{a-1}\times  D^{b} \times (\mathbb{Z}_2)^{m-n-1}) \big)  & \\
	     \cup
	       \big(D^a\times  S^{b-1} \times (\mathbb{Z}_2)^{m-n-1}\big), & \text{ if } a\not=1, n;\\
 \big((\mathbb{R}\mathcal{Z}_P \times \mathbb{Z}_2) \backslash
  (S^0\times D^n\times (\mathbb{Z}_2)^{m-n}) \big) &\\
  \cup \big(D^1\times S^{n-1}\times (\mathbb{Z}_2)^{m-n}\big),  & \text{ if } a=1;\\
  \Big( \big(\mathbb{R}\mathcal{Z}_P \setminus (D^1\times S^{n-1}\times (\mathbb{Z}_2)^{m-n})\big) \cup &\\
 (S^0\times D^{n}\times (\mathbb{Z}_2)^{m-n}) \Big) \slash \mathbb{Z}_2,              & \text{ if } a=n.
\end{cases}
	   \]\vskip .1cm

Since flips on an $n$-simplex $\Delta^n$ can produce all simple ploytopes (see Theorem~\ref{Thm:BM-flips} and Corollary~\ref{Cor:less-Flips}),
we can use the above equivariant surgeries to
produce all (real) moment-angle manifolds from $S^{2n+1} =\mathcal{Z}_{\Delta^n}$ (or $S^n = \mathbb{R}\mathcal{Z}_{\Delta^n}$).
Moreover, for any PL sphere $K$ we can define
 moment-angle complex $\mathcal{Z}_K$ and real moment-angle complex
 $\mathbb{R}\mathcal{Z}_K$, which generalizes the constructions for
 simple polytopes. Indeed, $\mathcal{Z}_K$ and
 $\mathbb{R}\mathcal{Z}_K$ are still topological manifolds for any PL sphere $K$.

 Furthermore, two moment-angle manifolds $\mathcal{Z}_{P_1}$ and $\mathcal{Z}_{P_2}$ are equivariantly homeomorphic if and only if $P_1$ and $P_2$ are combinatorially equivalent. Moreover, in \cite{BM} equivariant homeomorphism can be strengthened to equivariantly diffeomorphism.

Combing the results of Theorem \ref{Thm:BM-flips} and Buchstaber-Panov~\cite[\S 6.4]{BP1}, we have the following.

\begin{thm} \label{Thm:Surgery}
Lickorish type constructions for moment-angle manifolds and real moment-angle manifolds.
\begin{itemize}
		\item[$(1)$] ${\Bbb C}$-case:
\begin{align*}
	&\ \ \big\{\text{All moment-angle manifolds over simple $n$-polytopes with $m$ facets} \big\} /\sim_H\\
		&\varsubsetneqq
		\big\{\text{All moment-angle complexes over \  PL
		$(n-1)$-spheres with $m$ vertices} \big\} /\sim_H\\
		&\ \ \  = AS\big\{ \mathcal{Z}_{\Delta^n} = S^{2n+1} \,\big|\, \widetilde{\digamma}^{\mathbb{C}}_{(a, b)}, a+b=n+1,
		1\leq a,b \leq n \big\};
\end{align*}
\item[$(2)$] ${\Bbb R}$-case:
\begin{align*}
	&\ \ \big\{\text{All real moment-angle manifolds over simple $n$-polytopes with $m$ facets} \big\}/ \sim_H\\
	&\varsubsetneqq \big\{\text{All real moment-angle complexes over PL $(n-1)$-spheres with }\\
	&\ \ 	\text{$m$ vertices} \big\} /\sim_H
	= AS\big\{\mathbb{R}\mathcal{Z}_{\Delta^n} = S^{n} \,\big| \,
		\widetilde{\digamma}^{\mathbb{R}}_{(a, b)}, a+b=n+1, 1\leq a,b \leq n\big\},
\end{align*}	
	\end{itemize}
\noindent where $\sim_H$ denotes the equivalence relation of equivariant homeomorphism.
\end{thm}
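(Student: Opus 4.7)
The plan is to prove the complex and real cases in parallel, since the arguments are formally identical with $(D^2,S^1,T^r)$ replaced by $(D^1,S^0,(\mathbb{Z}_2)^r)$ throughout. The key inputs I will assemble are Pachner's theorem (which makes bistellar moves transitive on PL spheres of a fixed dimension, starting from $\partial \Delta^n$), the flip/bistellar-move correspondence already noted in Section 3, and a local analysis of the pre-images $\pi_{\mathbb{C}}^{-1}(N(f))$ and $\pi_{\mathbb{R}}^{-1}(N(f))$ that matches the explicit surgery formulas given above for $\widetilde{\digamma}^{\mathbb{C}}_{(a,b)}$ and $\widetilde{\digamma}^{\mathbb{R}}_{(a,b)}$.

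First I would extend the definitions of $\mathcal{Z}$ and $\mathbb{R}\mathcal{Z}$ from simple polytopes to arbitrary PL $(n-1)$-spheres. For a simplicial complex $K$ on vertex set $[m]$, set
\[
\mathcal{Z}_K \;=\; \bigcup_{\sigma \in K}\bigl((D^2)^{\sigma}\times (S^1)^{[m]\setminus \sigma}\bigr) \;\subset\; (D^2)^m,
\]
and define $\mathbb{R}\mathcal{Z}_K$ analogously with $D^1,\, S^0$ in place of $D^2,\, S^1$. When $K = \partial P^{*}$ this recovers the two manifolds from Section 2, and for any PL $(n-1)$-sphere $K$ both $\mathcal{Z}_K$ and $\mathbb{R}\mathcal{Z}_K$ are topological manifolds. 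In particular $\mathcal{Z}_{\partial \Delta^n} = S^{2n+1}$ and $\mathbb{R}\mathcal{Z}_{\partial \Delta^n} = S^n$.

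The central step is to verify that performing a bistellar $(a-1)$-move $\chi_{\sigma}$ on a PL sphere $K$ (equivalently, doing a flip of type $(a,b)$ at the corresponding simplicial face $f$ of the dual polytope $P$) transforms $\mathcal{Z}_K$ by exactly the equivariant surgery $\widetilde{\digamma}^{\mathbb{C}}_{(a,b)}$, and similarly in the real case. I would argue locally over a star neighborhood $N(f)$: the facets meeting $f$ contribute a piece combinatorially modeled on $\Delta^{a-1}\diamond H_{a,b}$, whose pre-image in $\mathcal{Z}_K$ factors as $S^{2a-1}\times D^{2b}$ times a torus $T^{m-n-1}$ coming from the facets disjoint from $f$. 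Gluing in $\Delta^{b-1}\diamond H_{a,b}$ in place of $\Delta^{a-1}\diamond H_{a,b}$ replaces this local piece by $D^{2a}\times S^{2b-1}\times T^{m-n-1}$, which is exactly the surgery in the generic range $2\leq a \leq n-1$. The two boundary cases $a=1$ and $a=n$ require care: when $a=1$ one cuts off a vertex, so the number of facets grows by one and an extra $S^1$ factor must be introduced via the product with $\mathcal{Z}_P\times S^1$; when $a=n$ a simplicial facet is contracted to a vertex, producing the quotient $/S^1$ displayed in the $a=n$ branch. Establishing this surgery/bistellar correspondence---and particularly the dimension-shift at the boundary cases---is the main technical obstacle; the interior range is essentially a routine product surgery.

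Granting the surgery/bistellar correspondence, both parts of the theorem follow quickly. For the equality $AS\{S^{2n+1}\,|\,\widetilde{\digamma}^{\mathbb{C}}_{(a,b)}\}/{\sim_H} = \{\mathcal{Z}_K : K\text{ a PL }(n-1)\text{-sphere on }m\text{ vertices}\}/{\sim_H}$, given any such $K$ Pachner's theorem yields a sequence of bistellar moves carrying $\partial\Delta^n$ to $K$; applying the matching surgeries in order produces $\mathcal{Z}_K$ from $\mathcal{Z}_{\partial\Delta^n}=S^{2n+1}$ up to equivariant homeomorphism, and conversely every such surgery returns a moment-angle complex of a PL sphere. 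For the proper inclusion $\varsubsetneqq$ I would invoke the existence of non-polytopal PL $(n-1)$-spheres (e.g.\ the Br\"uckner sphere in dimension three, and constructions of Barnette and others in higher dimensions), together with the rigidity principle quoted in Section 4 that $\mathcal{Z}_K$ and $\mathcal{Z}_{K'}$ are equivariantly homeomorphic only if $K$ and $K'$ are isomorphic as simplicial complexes: the moment-angle complex of a non-polytopal PL sphere is then a manifold not equivariantly homeomorphic to $\mathcal{Z}_P$ for any simple polytope $P$. The real case is verbatim with every complex disc, circle, and torus replaced by its real counterpart.
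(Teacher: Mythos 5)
Your proposal follows essentially the same route as the paper's (extremely terse) argument, which simply combines Bosio--Meersseman's flip theorem, the Buchstaber--Panov surgery formulas from \S 6.4 of their book, and Pachner's theorem; you fill in the details that the paper leaves implicit. The decomposition into (i) extending $\mathcal{Z}_K$, $\mathbb{R}\mathcal{Z}_K$ to PL spheres, (ii) verifying that a bistellar $(a-1)$-move on $K$ induces $\widetilde{\digamma}^{\mathbb{C}}_{(a,b)}$ (resp. $\widetilde{\digamma}^{\mathbb{R}}_{(a,b)}$) on $\mathcal{Z}_K$ with the dimension-shifts at $a=1$ and $a=n$, (iii) invoking Pachner to deduce the equality $AS\{\cdots\}/{\sim_H}=\{\mathcal{Z}_K\}/{\sim_H}$, and (iv) citing non-polytopal PL spheres together with combinatorial rigidity of moment-angle complexes for the strict inclusion, is exactly what the paper intends. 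One small caveat worth flagging (a defect of the theorem's statement rather than of your argument): the proper inclusion $\varsubsetneqq$ only makes sense when $n-1\ge 3$ and $m$ is large enough for a non-polytopal example to exist; for small $(n,m)$ the two sets coincide. Also, the rigidity statement in Section~4 is phrased for moment-angle manifolds over simple polytopes, so you should note that the same fixed-point/isotropy argument extends it to moment-angle complexes over arbitrary PL spheres, which is what your step (iv) actually uses.
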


Since $\widetilde{\digamma}_{(a,b)}^{\mathbb{C}}$ and $\widetilde{\digamma}_{(a,b)}^{\mathbb{R}}$ both preserve the equivariant cobordism classes of the corresponding manifolds, we can deduce the following from Corollary~\ref{Cor:less-Flips}.
\begin{coro}
The moment-angle manifold and real moment-angle manifold of any simple polytope are equivariantly cobordant to zero in the category of
compact manifolds
with effective real torus or $\mathbb{Z}_2$-torus actions.
\end{coro}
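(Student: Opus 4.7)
The plan is to proceed by induction on the number of flips needed to obtain $P$ from the $n$-simplex $\Delta^n$, invoking Corollary~\ref{Cor:less-Flips} together with the observation that each equivariant surgery $\widetilde{\digamma}_{(a,b)}^{\mathbb{C}}$ (respectively $\widetilde{\digamma}_{(a,b)}^{\mathbb{R}}$) in the allowed range is realized as the boundary change of an equivariant handle attachment, and hence preserves the equivariant cobordism class.

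For the base of the induction I would check directly that the simplex cases are equivariantly null-bordant: $\mathcal{Z}_{\Delta^n}=S^{2n+1}$ equivariantly bounds $D^{2n+2}$, since the standard effective coordinatewise $T^{n+1}$-action on $\mathbb{C}^{n+1}$ restricts to an effective action on $D^{2n+2}\subset\mathbb{C}^{n+1}$ whose boundary action is precisely the canonical one on $\mathcal{Z}_{\Delta^n}$. Likewise $\mathbb{R}\mathcal{Z}_{\Delta^n}=S^{n}=\partial D^{n+1}$ with the coordinatewise sign action of $(\mathbb{Z}_{2})^{n+1}$.

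For the inductive step, I would suppose $P$ is obtained from a simpler simple polytope $P'$ by a flip $\digamma_{(a,b)}$ with $1\le a\le n-1$ and $2\le b\le n$, and assume by hypothesis that $\mathcal{Z}_{P'}$ equivariantly bounds some $W'$. When $a\ne 1$, the formula for $\widetilde{\digamma}_{(a,b)}^{\mathbb{C}}$---removing $S^{a-1}\times D^{b}\times T^{m-n-1}$ and glueing in $D^{a}\times S^{b-1}\times T^{m-n-1}$---is exactly the effect on the top boundary of attaching the equivariant handle $D^{a}\times D^{b}\times T^{m-n-1}$ to $\mathcal{Z}_{P'}\times[0,1]$. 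This produces an equivariant cobordism between $\mathcal{Z}_{P'}$ and $\mathcal{Z}_{P}$, which when glued onto $W'$ along $\mathcal{Z}_{P'}$ exhibits $\mathcal{Z}_{P}$ as an equivariant boundary.

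The subcase $a=1$ (cutting off a vertex) is the only one that requires extra bookkeeping, because the surgery is applied to $\mathcal{Z}_{P'}\times S^{1}$ rather than $\mathcal{Z}_{P'}$ and the acting torus grows from $T^{m}$ to $T^{m+1}$. Here I would first observe that $\mathcal{Z}_{P'}\times S^{1}$ is $T^{m+1}$-equivariantly null-cobordant via $\mathcal{Z}_{P'}\times D^{2}$, where the new circle acts by rotation on $D^{2}$ and trivially on the first factor (the action is effective because it is effective on each factor separately). The handle-attachment argument then shows $\mathcal{Z}_{P}$ is $T^{m+1}$-equivariantly cobordant to $\mathcal{Z}_{P'}\times S^{1}$, and concatenation yields the desired null-bordism. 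Using Corollary~\ref{Cor:less-Flips} rather than the full Theorem~\ref{Thm:BM-flips} is \emph{essential}: it lets me avoid flips of type $(n,1)$ entirely, sparing the $S^{1}$-quotient step appearing in the formula for $\widetilde{\digamma}_{(n,1)}^{\mathbb{C}}$. The real case is verbatim parallel, with $T$ replaced by $\mathbb{Z}_{2}$, $S^{1}$ by $S^{0}$, and $D^{2}$ by $D^{1}$ with the reflection action. I expect the main obstacle to be purely formal---tracking the growing torus in the $a=1$ subcase and verifying effectiveness at every stage---rather than conceptual.
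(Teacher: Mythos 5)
Your proposal is correct and follows essentially the same route as the paper's own proof: invoke Corollary~\ref{Cor:less-Flips} to exclude $(n,1)$-flips, note that the surgeries $\widetilde{\digamma}^{\mathbb{C}}_{(a,b)}$ and $\widetilde{\digamma}^{\mathbb{R}}_{(a,b)}$ are realized by equivariant handle attachments (for $a\ne 1$) or by a product with a circle (respectively $S^0$) followed by a surgery (for $a=1$), and finally observe that the base objects bound equivariantly. The paper states this globally (``$\mathcal{Z}_P$ is equivariantly cobordant to $S^{2n+1}\times T^{m-n-1}$, which bounds $D^{2n+2}\times T^{m-n-1}$'') while you organize the same content as a step-by-step induction with an explicit null-bordism $\mathcal{Z}_{P'}\times D^{2}$ at each $(1,n)$-flip; the only blemish is a small typo (the $\mathbb{C}$-case handle should be $D^{2a}\times D^{2b}\times T^{m-n-1}$, not $D^{a}\times D^{b}\times T^{m-n-1}$).
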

\begin{proof}
 Let $P$ be a simple $n$-polytope with $m$ facets.
  By Corollary~\ref{Cor:less-Flips} and the definition of
  $\widetilde{\digamma}^{\mathbb{C}}_{(a, b)}$ and $\widetilde{\digamma}^{\mathbb{R}}_{(a, b)}$, the manifolds $\mathcal{Z}_{P}$ and
     $\mathbb{R}\mathcal{Z}_{P}$ are
      equivariantly cobordant to $S^{2n+1}\times T^{m-n-1}$ and
     $S^{n}\times (\mathbb{Z}_2)^{m-n-1}$, respectively.
     There is natural extension of the
      canonical action of $T^m =T^{n+1}\times T^{m-n-1}$
     on $S^{2n+1}\times T^{m-n-1}$ to $D^{2n+2}\times T^{m-n-1}$.
     So $\mathcal{Z}_{P}$ is equivariantly cobordant to zero.
     The similar argument works for $\mathbb{R}\mathcal{Z}_{P}$.
\end{proof}

Next, we consider some ``lower level'' classes
$\mathcal{Z}_P/H_{\mathbb{C}}$ and $\mathbb{R}\mathcal{Z}_P/H_{\mathbb{R}}$.
For the diagonal action,
it is well-known that:
$D_{\Bbb C}=\{(g, g,\cdots, g)\}\subseteq T^m\curvearrowright \mathcal{Z}_P$ is free, and
$D_{\Bbb R}=\{(g, g,\cdots, g)\}\subseteq (\mathbb{Z}_2)^m\curvearrowright {\Bbb R}\mathcal{Z}_P$ is free. We have the following result parallel to Theorem~\ref{Thm:Surgery}.

\begin{thm}
Lickorish type construction for quotient spaces induced by the diagonal action on (real) moment-angle manifolds.
	\begin{itemize}
		\item ${\Bbb C}$-case:
			$\big\{$All quotient spaces $\mathcal{Z}_P/D_{\Bbb C}$ $\big\} /\sim_H\subseteq AS\big\{ {\Bbb C}P^n = \mathcal{Z}_{\Delta^n}
			\slash D_{\Bbb C} \,\big| \,\widetilde{\digamma}_{(a, b)}^{D_{\Bbb C}}$,
			$a+b=n+1,  1\leq a, b \leq n\big\}$;
	     \vskip .1cm
		\item ${\Bbb R}$-case:
			$\big\{$All quotient spaces ${\Bbb R}\mathcal{Z}_P/D_{\Bbb R}$ $\big\} /\sim_H\subseteq AS\big\{ {\Bbb R}P^n=\mathbb{R}\mathcal{Z}_{\Delta^n}
			\slash D_{\Bbb R}\,\big|\, $
			$\widetilde{\digamma}_{(a, b)}^{D_{\Bbb R}}, a+b=n+1, 1\leq a,b \leq n\big\}$
		\end{itemize}
\noindent where $P$ runs over all possible  $n$-dimensional simple polytopes.
\end{thm}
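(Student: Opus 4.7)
The plan is to derive this theorem as a direct consequence of Theorem~\ref{Thm:Surgery} by descending the sequence of equivariant surgeries on $\mathcal{Z}_P$ (respectively $\mathbb{R}\mathcal{Z}_P$) to the quotient by the diagonal subgroup. The key observation is that the diagonal $D_{\mathbb{C}} \subset T^m$ (respectively $D_{\mathbb{R}} \subset (\mathbb{Z}_2)^m$) is canonically defined regardless of the number of facets $m$, acts freely on every moment-angle manifold, and is preserved by the natural inclusions of tori that appear in the surgery formulas.

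First I would verify the initial objects: $\mathcal{Z}_{\Delta^n}/D_{\mathbb{C}} = S^{2n+1}/S^1 = \mathbb{C}P^n$ because the diagonal $S^1 \subset T^{n+1}$ acts on $S^{2n+1} \subset \mathbb{C}^{n+1}$ by scalar multiplication; analogously $\mathbb{R}\mathcal{Z}_{\Delta^n}/D_{\mathbb{R}} = S^n/\mathbb{Z}_2 = \mathbb{R}P^n$. Next I would define $\widetilde{\digamma}^{D_{\mathbb{C}}}_{(a,b)}$ as the descent of $\widetilde{\digamma}^{\mathbb{C}}_{(a, b)}$ to the $D_{\mathbb{C}}$-quotient. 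The local pieces $S^{2a-1}\times D^{2b}\times T^{m-n-1}$ and $D^{2a}\times S^{2b-1}\times T^{m-n-1}$ appearing in $\widetilde{\digamma}^{\mathbb{C}}_{(a,b)}$ are all $D_{\mathbb{C}}$-invariant (since $D_{\mathbb{C}}$ lies diagonally in the ambient torus), and the gluing prescribed by the flip is $D_{\mathbb{C}}$-equivariant. Hence the surgery formulas pass to the quotient and define an operation on $\mathcal{Z}_P/D_{\mathbb{C}}$.

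With these ingredients in place, Theorem~\ref{Thm:Surgery}(1) yields, for any simple $n$-polytope $P$, a finite sequence of surgeries $\widetilde{\digamma}^{\mathbb{C}}_{(a,b)}$ starting from $\mathcal{Z}_{\Delta^n} = S^{2n+1}$ and ending at (a representative of the equivariant homeomorphism class of) $\mathcal{Z}_P$. Quotienting each intermediate manifold by the corresponding diagonal circle yields a finite sequence of $\widetilde{\digamma}^{D_{\mathbb{C}}}_{(a,b)}$ starting at $\mathbb{C}P^n$ and ending at $\mathcal{Z}_P/D_{\mathbb{C}}$, which is exactly the desired Lickorish type description in the $\mathbb{C}$-case. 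The $\mathbb{R}$-case is entirely analogous, replacing $S^1$ by $\mathbb{Z}_2$ throughout and appealing to Theorem~\ref{Thm:Surgery}(2).

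The main obstacle I anticipate is the careful bookkeeping in the boundary cases $a=1$ and $a=n$, where the surgery involves a product with $S^1$ (respectively a quotient by $S^1$) because the number of facets changes by one. Here one must identify the diagonal of $T^m$ with that of $T^{m\pm 1}$ compatibly with the natural inclusion/projection between these tori, and check that the extra $S^1$-factor introduced in the formula for $\widetilde{\digamma}^{\mathbb{C}}_{(1,n)}(\mathcal{Z}_P)=(\mathcal{Z}_P\times S^1)\setminus(S^1\times D^{2n}\times T^{m-n})\cup(D^2\times S^{2n-1}\times T^{m-n})$ interacts correctly with the enlarged diagonal action. Once this compatibility is established, the descent and the induction on the number of surgeries are routine, and the $\mathbb{Z}_2$-analogue of the same verification handles the real case.
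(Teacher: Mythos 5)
Your overall strategy — descend the surgery sequence of Theorem~\ref{Thm:Surgery} along the free diagonal $S^1$-action (resp.~$\mathbb{Z}_2$-action) — is indeed the approach the paper has in mind; the paper gives no proof beyond displaying the descended operations, and the initial identifications $\mathcal{Z}_{\Delta^n}/D_{\mathbb{C}}=\mathbb{C}P^n$, $\mathbb{R}\mathcal{Z}_{\Delta^n}/D_{\mathbb{R}}=\mathbb{R}P^n$ and the invariance of the local pieces $S^{2a-1}\times D^{2b}\times T^{m-n-1}$ for $a\neq 1,n$ are exactly the right things to check.

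However, your handling of the $a=1$ (and dually $a=n$) case is not merely a ``routine bookkeeping'' verification, and as written there is a genuine gap. The issue is that $(\mathcal{Z}_P\times S^1)/D_{\mathbb{C}}'$, where $D_{\mathbb{C}}'$ is the diagonal of $T^{m+1}$, is \emph{not} $(\mathcal{Z}_P/D_{\mathbb{C}})\times S^1$: the map $(x,h)\mapsto (h^{-1},\dots,h^{-1})\cdot x$ gives a homeomorphism $(\mathcal{Z}_P\times S^1)/D_{\mathbb{C}}'\cong \mathcal{Z}_P$, and $\mathcal{Z}_P$ is the total space of a generally \emph{nontrivial} circle bundle over $\mathcal{Z}_P/D_{\mathbb{C}}$ (for $P=\Delta^n$ with $n\geq 2$ this is the Hopf bundle $S^{2n+1}\to\mathbb{C}P^n$, which stays nontrivial even after removing the preimage of a small disk, since the Euler class survives in $H^2(\mathbb{C}P^{n-1})$). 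So the naive quotient of the surgery $\widetilde{\digamma}^{\mathbb{C}}_{(1,n)}$ produces a surgery on $\mathcal{Z}_P$ with a neighborhood of the exceptional orbit removed, not a surgery on $(\mathcal{Z}_P/D_{\mathbb{C}})\times S^1$ as the displayed formula for $\widetilde{\digamma}^{D_{\mathbb{C}}}_{(1,n)}$ suggests. Reconciling the two presentations (which is what actually makes $\widetilde{\digamma}^{D_{\mathbb{C}}}_{(1,n)}$ a well-defined operation \emph{on the quotient} $\mathcal{Z}_P/D_{\mathbb{C}}$ rather than on $\mathcal{Z}_P$) requires an explicit argument that gluing in $D^2\times S^{2n-1}\times T^{m-n-1}$ along a suitably chosen boundary diffeomorphism trivializes the circle bundle over the complement; this choice of framing cannot be dismissed as automatic, and it is precisely what your proposal defers. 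Filling in that step would complete the proof.
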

The operations $\widetilde{\digamma}_{(a, b)}^{D_{\Bbb C}}$ and
$\widetilde{\digamma}_{(a, b)}^{D_{\Bbb R}}$ are defined as follows.
$$\widetilde{\digamma}^{D_{\mathbb{C}}}_{(a, b)}(\mathcal{Z}_P)=
\begin{cases}
\big((\mathcal{Z}_P \slash D_{\mathbb{C}})\setminus (S^{2a-1}\times D^{2b}\times T^{m-n-2}) \big)& \\
\cup (D^{2a}\times S^{2b-1}\times T^{m-n-2}), & \text{ if } a\not=1,n;\\
\big((\mathcal{Z}_P\slash D_{\mathbb{C}})\times S^1 \big)\setminus(S^1\times D^{2n}\times T^{m-n-1})&\\
\cup(D^2\times S^{2n-1}\times T^{m-n-1}),& \text{ if } a=1;\\
 \text{converse operation of $a=1$ case},              & \text{ if } a=n.
\end{cases}$$
\[  \widetilde{\digamma}^{D_{\mathbb{R}}}_{(a, b)} (\mathbb{R}\mathcal{Z}_{P}\slash D_{\mathbb{R}}) =
 \begin{cases}
 \big((\mathbb{R}\mathcal{Z}_{P}\slash D_{\mathbb{R}}) \backslash
	     (S^{a-1}\times  D^{b} \times (\mathbb{Z}_2)^{m-n-2}) \big)  & \\
	     \cup
	       \big(D^a\times  S^{b-1} \times (\mathbb{Z}_2)^{m-n-2}) \big), & \text{ if } a\not=1, n;\\
 \big( (\mathbb{R}\mathcal{Z}_P\slash D_{\mathbb{R}} \times \mathbb{Z}_2 ) \backslash (S^0\times D^n\times (\mathbb{Z}_2)^{m-n-1})\big) &\\
  \cup \big(D^1\times S^{n-1}\times (\mathbb{Z}_2)^{m-n-1}\big),  & \text{ if } a=1;\\
 \text{converse operation of $a=1$ case},              & \text{ if } a=n.
\end{cases}
	   \]
\begin{rem}
In this section, we discuss the liftings from the combinatorial type surgeries to the equivariant surgeries on the highest level. 	These liftings are one-to-one correspondence between equivariant homeomorphsim classes
and combinatorial equivalent classes.
Hence we could say these surgeries are still constructible, since one can produce these ``highest level'' objects by concrete date from these concrete combinatorial operations.
\end{rem}

\section{Construction of quasitoric manifolds and small covers}

\subsection{Low dimensional cases I: quasitoric manifolds}
In the case of 2-dimensional simple polytopes, P. Orlik and F. Raymond's work \cite{OR} implies the following (also see~\cite[p.427]{DJ}).
\begin{thm} [Orlik-Raymond~\cite{OR}]\label{Orlik-Raymond}
			$$\{\text{All $4$-dim quasitoric manifolds}\big\}/\sim_D=AS\big\{ {\Bbb C}P^2,
			\overline{{\Bbb C}P}^2, S^2\times S^2,{\Bbb C}P^2\sharp
			\overline{{\Bbb C}P}^2\big|\,
		\widetilde{\sharp}\big\}$$
		where $\sim_D$ denotes the equivalence relation of $T^2$-diffeomorphism and
		$\widetilde{\sharp}$ is the equivariant
		connected sum of two manifolds.
\end{thm}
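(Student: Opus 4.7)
The plan is to establish the equality by proving each containment separately, using the surgery-lift framework of Section~4 together with Orlik-Raymond's classification of effective $T^2$-actions on closed smooth 4-manifolds \cite{OR}.

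For the $\supseteq$ direction, I would first verify that each of the four listed generators is a quasitoric 4-manifold: $\mathbb{C}P^2$ and $\overline{\mathbb{C}P}^2$ arise over the triangle via the two orientations of the essentially unique characteristic function, while $S^2\times S^2 = H_0$ and $\mathbb{C}P^2\sharp\overline{\mathbb{C}P}^2 = H_1$ are the lowest Hirzebruch surfaces over the square. Then the equivariant connected sum $M_1\widetilde{\sharp}M_2$ at a pair of $T^2$-fixed points remains quasitoric: its orbit polygon is obtained by gluing $P_1$ and $P_2$ along the chosen vertices, yielding an $(m_1+m_2-2)$-gon whose characteristic function is inherited from the two summands.

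For the $\subseteq$ direction, let $M^4$ be a quasitoric 4-manifold over an $m$-gon $P_m$ with $m\geq 3$, and proceed by induction on $m$. When $m=3$, the basis condition at the three vertices forces the characteristic function on the triangle to be $GL(2,\mathbb{Z})$-equivalent to the standard one up to orientation, so $M\cong\mathbb{C}P^2$ or $\overline{\mathbb{C}P}^2$. For $m\geq 5$, select any vertex $v$ of $P_m$: since $P_m$ is convex, it arises from a convex $(m-1)$-gon $P_{m-1}$ by a type-$(1,2)$ flip cutting off some vertex $v'$. The $\mathbb{C}$-case lift $\widetilde{\digamma}^{\mathbb{C}}_{(1,2)}$ from Section~4 applied in reverse then expresses $M\cong M'\widetilde{\sharp}N$, where $M'$ is a quasitoric 4-manifold over $P_{m-1}$ and $N\in\{\mathbb{C}P^2,\overline{\mathbb{C}P}^2\}$ is determined by the sign pattern of the characteristic function at the newly introduced facet. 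The inductive hypothesis then places $M'$, hence $M$, in the algebraic system.

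The main obstacle is the base case $m=4$. Quasitoric 4-manifolds over the square form the infinite family $\{H_k\}_{k\in\mathbb{Z}}$ of Hirzebruch surfaces, but only $H_0$ and $H_1$ appear among the generators. The resolution invokes Orlik-Raymond's structural theorem: for $|k|\geq 2$, $H_k$ admits an equivariant connected-sum decomposition into factors chosen from the four generators, where the essential flexibility comes from the nontrivial choice of $T^2$-equivariant gluing data, namely which $T^2$-fixed point is used on each summand and which equivariant diffeomorphism of the bounding invariant 3-sphere realizes the gluing. The technical heart of the argument is to track the characteristic function carefully across each equivariant connected sum, and to verify that every Hirzebruch index $k$ can indeed be produced this way.
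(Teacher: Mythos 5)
The paper gives no proof of this theorem; it is a survey statement cited from Orlik--Raymond \cite{OR} and Davis--Januszkiewicz \cite{DJ}. So I can only evaluate your argument on its own terms, and there is a genuine gap precisely at the place you flag as ``the main obstacle.''

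Your description of the equivariant connected sum at a pair of $T^2$-fixed points is the vertex-gluing operation on orbit polygons: cut a corner off each summand's polygon and glue along the new edges, matching the characteristic data by a single element of $GL(2,\mathbb{Z})$. For two generators over triangles this yields an orbit $4$-gon, so an induction over the number $m$ of facets is forced to produce every Hirzebruch surface $H_k$ at the base case $m=4$. But the gluing data you invoke --- which fixed points are used and which equivariant diffeomorphism of the invariant $S^3$ realizes the gluing --- is finite up to isotopy: each of $\mathbb{C}P^2,\overline{\mathbb{C}P}^2$ has three fixed points, all with weight pair a basis of $\mathbb{Z}^2$, and the group of equivariant self-diffeomorphisms of the linear $T^2$-action on $S^3$ modulo equivariant isotopy is a finite (signed-permutation) group. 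Hence $\mathbb{C}P^2\,\widetilde{\sharp}\,\mathbb{C}P^2$, $\mathbb{C}P^2\,\widetilde{\sharp}\,\overline{\mathbb{C}P}^2$ and $\overline{\mathbb{C}P}^2\,\widetilde{\sharp}\,\overline{\mathbb{C}P}^2$ produce only finitely many $T^2$-diffeomorphism types over the square, whereas $\{H_k\}_{k\geq 0}$ is an infinite family of pairwise non-$T^2$-diffeomorphic quasitoric manifolds (the self-intersection multiset $\{0,\pm k\}$ of the invariant spheres is an equivariant invariant). So the claimed flexibility of the gluing cannot reach all $H_k$, and the step ``every Hirzebruch index $k$ can be produced this way'' is not merely unverified --- with $\widetilde{\sharp}$ understood as vertex connected sum, it is false. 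In fact, the formulation in \cite{OR} and \cite{DJ} lists the entire Hirzebruch family (equivalently, all $T^2$-equivariant $S^2$-bundles over $S^2$) among the building blocks, not only $H_0$ and $H_1$; the four generators in the statement should be read with all of their quasitoric $T^2$-structures allowed, and your proof would have to make that interpretation explicit and then show $H_k$ is a generator rather than a composite.

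There is also a smaller gap in the inductive step for $m\ge 5$: you reduce the polygon by undoing a vertex cut, but the reduction must be compatible with the characteristic function, i.e.\ there must exist an edge of the $m$-gon whose facial $2$-sphere has self-intersection $\pm1$ so that the blow-down is defined on the quasitoric level. This does hold for $m\ge5$ (it follows from a counting argument of the form $\sum_i a_i = 12-3m$ together with the convexity of the normal data around the circle), but it is a real lemma that the induction needs and the ``convex polygon arises from an $(m-1)$-gon by a flip'' observation alone does not supply it.
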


In the case of $3$-dimensional small covers, Izmestiev~\cite{Izmestiev}
 studied a special class of $3$-dimensional small covers whose characteristic functions
	take values in a basis of	$(\mathbb{Z}_2)^3$ (i.e. three linearly independent elements of $(\mathbb{Z}_2)^3$).		
 Izmestiev~\cite{Izmestiev} gave a Lickorish type construction of such $3$-dimensional small covers as follows.	
	\begin{thm} [Izmestiev~\cite{Izmestiev}]
		\		
		\begin{itemize}
			\item Combinatorial case:
			$$\mathcal{C}=\big\{(P^3, \lambda)\ | \ |\mathrm{Im} \lambda|=3\big\}=AS\big\{(I^3, \lambda_0) \ \text{with}\ |\mathrm{Im} \lambda_0|=3\ |\
			\sharp, \natural\big\}$$
			where $\sharp$ is the connected sum of two simple polytopes at some vertices and
			$\natural$ is the operation on a $3$-polytope
			shown in Figure~\ref{p:Dehn}.\vskip .1cm
			\item Topological case:
			$$\big\{ M(P^3, \lambda) \ | \ (P^3, \lambda)\in\mathcal{C}\big\}/=AS
			    \big\{M(I^3, \lambda_0)=T^3  \ | \
			\widetilde{\sharp}, \widetilde{\natural}\, \big\}$$
			where $\widetilde{\sharp}$ and  $\widetilde{\natural}$ are the equivariant
			connected sum and the equivariant $\frac{0}{1}$-type Dehn surgery, respectively.
		\end{itemize}
	\end{thm}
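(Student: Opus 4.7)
I would prove the two parts in tandem, establishing the combinatorial statement first and then lifting it to the topological one via the functoriality of the small cover construction.

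\textbf{Step 1 (combinatorial case).} I would proceed by induction on a complexity function for pairs $(P^3,\lambda)\in\mathcal{C}$, the natural candidate being the number of facets $m$ of $P^3$; the base case $m=6$ should be shown to force $(P^3,\lambda)\cong (I^3,\lambda_0)$ once we use that $|\mathrm{Im}\,\lambda|=3$ and $\lambda$ is a characteristic function. For the inductive step, given $(P^3,\lambda)$ with $m>6$, I would search $P^3$ for a \emph{reducing configuration}: either (a) a triangular face or a small ``neck'' around a vertex that exhibits $P^3$ as a connected sum $P_1\sharp P_2$ at a vertex in a way that is compatible with $\lambda$ (the three facets meeting the gluing vertex must carry the three distinct colors), or (b) a local configuration of facets/edges on which the $\natural$ move of Figure~\ref{p:Dehn} can be reversed. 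The combinatorial content is that in any 3-polytope whose facets are properly 3-colored (which is precisely the condition $|\mathrm{Im}\,\lambda|=3$ with $\lambda$ characteristic, by parity of face lengths), at least one of these two configurations must appear once $m>6$; this is the heart of Izmestiev's argument and the step where I expect to spend the most care.

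\textbf{Step 2 (lifting the operations).} Next I would verify that each combinatorial move lifts uniquely to an equivariant surgery on $M(P^3,\lambda)$. The key observation is the functoriality of $M(-,-)$: the preimage $\pi^{-1}(v)$ of a vertex under the orbit map is a single fixed point of the $(\mathbb{Z}_2)^3$-action, and the preimage of a small neighborhood of $v$ is an equivariantly standard $(\mathbb{Z}_2)^3$-ball $D^3$. Thus a vertex connected sum $P_1\sharp P_2$ of colored polytopes (with matching colors on the three meeting facets) corresponds bijectively to the equivariant connected sum $M(P_1,\lambda_1)\,\widetilde{\sharp}\, M(P_2,\lambda_2)$ at the two fixed points. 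Similarly, for the $\natural$-move I would identify the subpolytope modified in Figure~\ref{p:Dehn} with a piece whose preimage in $M(P^3,\lambda)$ is a solid-torus neighborhood $S^1\times D^2$ of an embedded equivariant circle; the combinatorial replacement then matches a $\tfrac{0}{1}$-framed Dehn surgery on this circle, which is exactly $\widetilde{\natural}$.

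\textbf{Step 3 (conclusion).} Combining the previous two steps: $(I^3,\lambda_0)$ yields $M(I^3,\lambda_0)=T^3$, and any $(P^3,\lambda)\in\mathcal{C}$ is produced from $(I^3,\lambda_0)$ by a finite sequence of $\sharp$'s and $\natural$'s by Step 1, so $M(P^3,\lambda)$ is equivariantly obtained from $T^3$ by the corresponding sequence of $\widetilde{\sharp}$'s and $\widetilde{\natural}$'s by Step 2. The reverse inclusions are immediate since both combinatorial operations preserve $\mathcal{C}$, and both equivariant surgeries preserve the class of small covers coming from $\mathcal{C}$.

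\textbf{Main obstacle.} The principal difficulty is the existence-of-reducing-configuration claim in Step 1: one must rule out ``rigid'' 3-colored simple 3-polytopes with $m>6$ facets that admit neither a compatible vertex connected-sum decomposition nor a reversible $\natural$-configuration. Here one exploits Steinitz-type arguments on the planar 3-valent dual graph together with the parity restriction forced by the 3-coloring $\lambda$; Euler's formula together with the mod-2 linear-algebra constraints on $\lambda$ should force at least one of the two reducing patterns to occur, but making this case analysis exhaustive (especially distinguishing when only $\natural$ works and $\sharp$ does not) is the delicate part.
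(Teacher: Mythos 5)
The paper you are comparing against does not actually prove this statement: it is a survey, and Theorem 5.2 is quoted from Izmestiev~\cite{Izmestiev} with a citation only, so there is no in-paper proof for your proposal to be measured against. Assessed on its own merits, your sketch is the standard and essentially correct outline of how Izmestiev's argument goes: induct on the number of facets, identify the cube as the unique base case (which does follow from Euler's formula plus the even-face condition forced by a proper $3$-coloring), produce a reducing configuration for $m>6$, and then observe that vertex connected sum and the $\natural$-move lift functorially to the equivariant connected sum at fixed points and to an equivariant $\tfrac{0}{1}$-Dehn surgery along an invariant circle, respectively. The lifting in Step~2 is routine given the small-cover construction $M(P,\lambda)=P\times(\mathbb{Z}_2)^3/\!\sim$, and your Step~3 closure argument is fine.

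The genuine gap, which you yourself flag, is that Step~1 is not actually carried out: you assert but do not show that every properly $3$-colored simple $3$-polytope with $m>6$ facets admits either a coloring-compatible vertex connected-sum decomposition or a reversible $\natural$-configuration. This is precisely the nontrivial content of Izmestiev's theorem, and waving at ``Steinitz-type arguments'' and ``mod-2 linear algebra'' does not discharge it; one must, for instance, control the case where the polytope is $3$-irreducible in the sense that no $3$-belt of facets separates it, and show a $\natural$-pattern (a prismatic or belt-like configuration) is then forced. Until that combinatorial lemma is proved, the proposal is a plausible roadmap rather than a proof. A secondary point worth tightening: the equality of algebraic systems has two inclusions, and while you dispose of the reverse inclusion quickly by saying the operations preserve $\mathcal{C}$, you should verify explicitly that $\natural$ applied to a pair in $\mathcal{C}$ stays inside $\mathcal{C}$ (i.e.\ preserves both simplicity of the polytope and the three-color condition), since the figure defining $\natural$ is not self-evidently color-preserving.
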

	
	 \begin{figure}
         \includegraphics[width=0.45\textwidth]{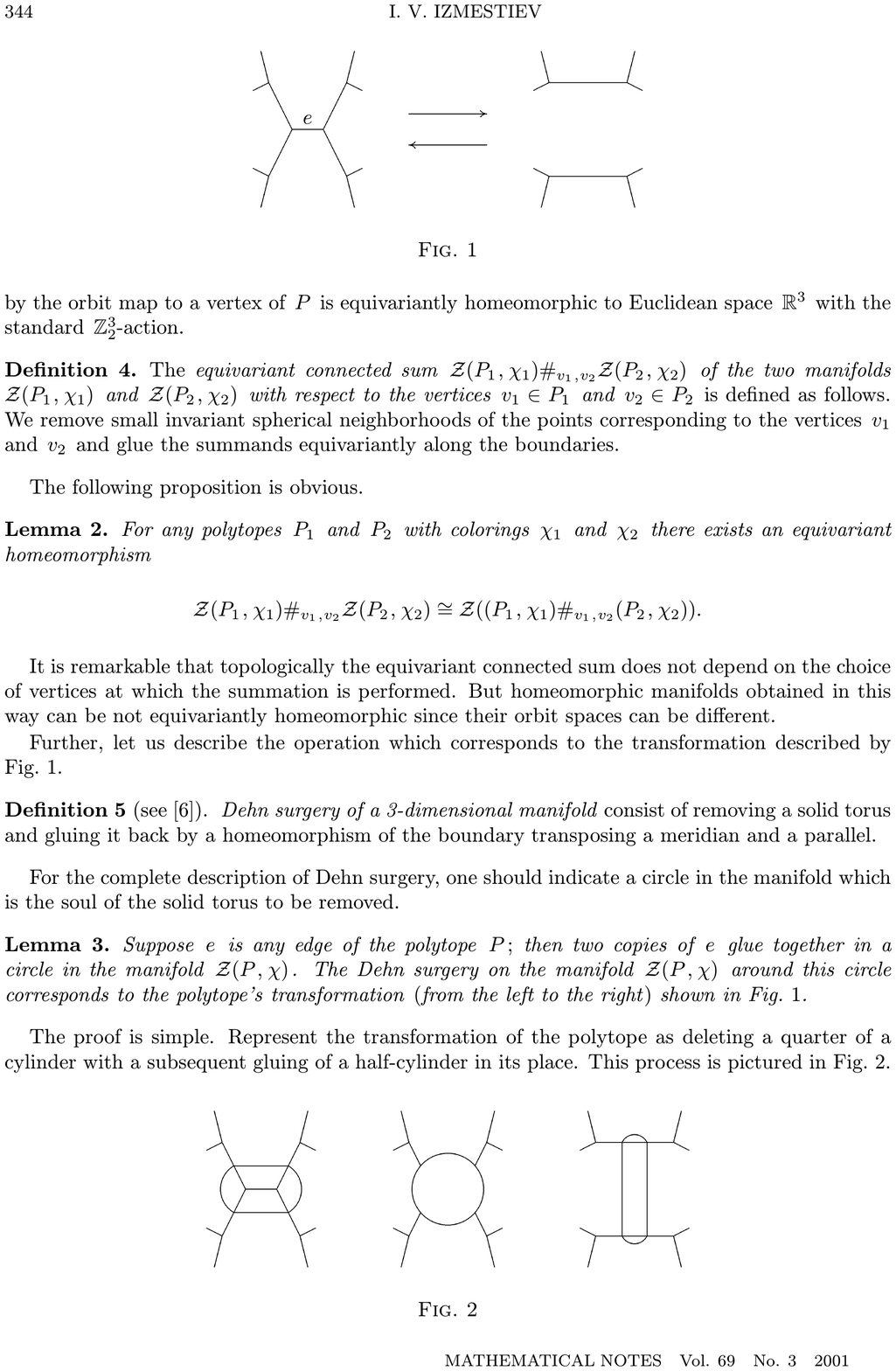}\\
          \caption{}\label{p:Dehn}
      \end{figure}

In dimension 6, Shintar\^{o} Kuroki discussed the equivariant diffeomorphism classification question of all 6-dimensional torus manifolds with vanishing odd-degree cohomology, which is a wider class of manifolds including quasitoric manifolds. Similar to Theorem~\ref{Orlik-Raymond},
\begin{thm} [Kuroki~\cite{Kuroki16}]
Up to $T^3$-diffeomorphism, we have one-to-one correspondence:

$ \big\{  \text{All $6$-dimensional 1-connected torus manifolds with vanishing odd-degree}$

$ \text{\ cohomology} \big\}/\sim_D$ $=AS\big\{
S^6, S^4 \text{-bundles over~}S^2, \text{quasitoric 6-manifolds~}\,\big|\,~
	\widetilde{\sharp}~\big\}$,
	where $\widetilde{\sharp}$ is the equivariant
	connected sum of two manifolds and $\sim_D$ denotes the equivalence relation of equivariant $T^k$-diffeomorphism.
\end{thm}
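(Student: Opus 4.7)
The plan is to exploit the structural theorem of Masuda and Panov for torus manifolds with vanishing odd-degree cohomology, which says that such a manifold has a locally standard torus action whose orbit space is a \emph{homology polytope} (a manifold with corners in which every face is acyclic). For a $1$-connected $6$-dimensional torus manifold $M$ with $H^{\mathrm{odd}}(M;\mathbb{Q}) = 0$, the orbit space $Q = M/T^3$ is thus a compact $3$-dimensional manifold with corners, and the equivariant data of $M$ is encoded by the face poset of $Q$ together with a characteristic function on its facets. Using the $1$-connectedness of $M$ and the acyclicity of all faces, one first shows that the underlying topological space $|Q|$ must be either $D^3$ or $S^3$.

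The next step is to reduce the equivariant diffeomorphism classification of $M$ to a combinatorial classification of such pairs $(Q,\lambda)$ and to perform a connected-sum decomposition. At a fixed point $v \in M$, corresponding to a vertex of $Q$, removing a small invariant $6$-disc neighborhood and gluing in another such neighborhood is exactly the inverse of the equivariant connected sum $\widetilde{\sharp}$ at $v$. On the orbit space side this corresponds to cutting $Q$ along a small triangular $2$-face near $v$, decomposing $Q = Q_1 \#_v Q_2$. Iterating this reduction on both the combinatorial and equivariant levels produces a prime decomposition of $M$ into summands that admit no further equivariant connected-sum decomposition.

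The final step is to identify the indecomposable summands by classifying all face structures on $|Q|\in\{D^3, S^3\}$ that resist further decomposition. Three cases should emerge: (i) if $|Q| = D^3$ with a single vertex, then $M \cong S^6$ with its standard linear $T^3$-action; (ii) if $Q$ is combinatorially a ``prism'', i.e.\ two facets connected by a ring of side facets with no separating triangle, then the retraction to a $2$-face realizes $M$ equivariantly as an $S^4$-bundle over $S^2$; and (iii) if $Q$ is combinatorially a simple $3$-polytope in which every vertex admits a non-separating neighborhood, then $M$ is by definition a quasitoric $6$-manifold over $Q$. Conversely, each of $S^6$, an $S^4$-bundle over $S^2$, and a quasitoric $6$-manifold has orbit space of one of these three types, closing the loop.

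The principal obstacle, in my view, is step (iii): proving that no other indecomposable face structure on $|Q|=S^3$ or $D^3$ can occur beyond the prism and the simple-polytope cases. This requires a topological input (essentially the Poincar\'e conjecture to pin down $|Q|$) together with a combinatorial argument that any face structure with enough complexity admits a separating triangle through a fixed vertex. A parallel subtlety is the rigidity assertion in case (ii): one must show that for the prism orbit space the equivariant diffeomorphism type of $M$ is fully determined by the characteristic function, using the known classification of $T^3$-actions on $S^4$-bundles over $S^2$ to rule out any further exotic $T^3$-manifold with that orbit space.
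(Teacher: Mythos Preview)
The paper is a survey and does not supply a proof of this theorem; it simply quotes the result from Kuroki's paper~\cite{Kuroki16} and moves on. So there is no ``paper's own proof'' to compare your proposal against. Your outline is, in broad strokes, the strategy Kuroki actually uses: invoke the Masuda--Panov structure theorem to get a locally standard action with face-acyclic orbit space, translate the equivariant classification into a combinatorial one on $(Q,\lambda)$, and then run a connected-sum/prime-decomposition argument on the orbit space to isolate the indecomposable pieces.

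That said, there is one concrete slip in your sketch. A homology polytope in the sense of Masuda--Panov is required to have \emph{all} faces acyclic, including $Q$ itself; hence $|Q|$ is an acyclic compact $3$-manifold with nonempty boundary (the boundary is nonempty because a torus manifold has fixed points, which force vertices in $Q$). So $|Q|\cong D^3$ always, and the case $|Q|=S^3$ that you list cannot occur. This actually simplifies your step~(iii): once you know $Q$ is a disc with a face structure whose $2$-faces are discs and whose $1$-faces are arcs, the combinatorics you need is that such a $Q$ is either a simplex, has a ``suspension-like'' facet pattern (your prism case), or is combinatorially a simple $3$-polytope. The Poincar\'e conjecture is not needed here; acyclicity of $Q$ and of $\partial Q$ together with Alexander's theorem on $2$-spheres in $S^3$ suffice in dimension $3$. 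Apart from that, your plan is a faithful sketch of Kuroki's argument, but you should consult~\cite{Kuroki16} directly for the details, since the present paper gives none.
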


\subsection{Low dimensional case II: small covers}

The Four Color Theorem tells us that any simple
$3$-polytope admits $(\mathbb{Z}_2)^3$-colorings.
Denote by
 \begin{itemize}
  \item $\mathcal{P}:=$ the set of all pairs $(P^3, \lambda)$, where
	$P^3$ is a $3$-dimensional simple convex polytope and $\lambda$ is a
	nondegenerate $(\mathbb{Z}_2)^3$-coloring on it.\vskip .1cm
	
 \item $\mathcal{M}:=$ the set of all $3$-dimensional small covers.
\end{itemize}
By Davis-Januszkiewicz~\cite{DJ}, there exists a one-to-one correspondence:
\begin{align*}
		\mathcal{P} & \longleftrightarrow\mathcal{M} \\
		 (P^3, \lambda) &\longmapsto M(P^3,\lambda)
\end{align*}
	
	Zhi L\"u and Li Yu studied general $3$-dimensional small covers in~\cite{LY}
	 and showed
	that any $3$-dimensional small cover can be obtained from
	 small covers over $\Delta^3$ and a triangular prism
	via a sequence of surgeries.
	Combinatorially, one has:
\begin{thm} [L\"u-Yu~\cite{LY}]
		All pairs $(P^3, \lambda)$ of $\mathcal{P}$ form  an algebraic system with generators
		$(\Delta^3,\sigma\circ\lambda_0)$, $(P^3(3),\sigma\circ\lambda_1)$,
		$(P^3(3),\sigma\circ\lambda_2)$, $(P^3(3),\sigma\circ\lambda_3)$,
		$(P^3(3),\sigma\circ\lambda_4)$, $\sigma\in \text{\rm
			GL}(3,\mathbb{Z}_2)$ and
			 six operations $\sharp^v$, $\sharp^e$,
		$\sharp^{eve}$,$\natural$, $\sharp^\triangle$, $\sharp^\copyright$,
		where $\Delta^3$ is a $3$-simplex and $P^3(3)$ is a triangular prism,
		and $\lambda_0,\cdots,\lambda_4$ are shown in Figure~\ref{p:basic-pair} .
\end{thm}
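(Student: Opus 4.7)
The plan is to prove the claimed equality by induction on the number $m$ of facets of $P^3$. The base step handles pairs with $m\leq 5$. Any simple $3$-polytope with $m=4$ is combinatorially $\Delta^3$, and each nondegenerate coloring on $\Delta^3$ is $\sigma\circ\lambda_0$ for some $\sigma\in\mathrm{GL}(3,\mathbb{Z}_2)$. With $m=5$, the only combinatorial type is the triangular prism $P^3(3)$, and a direct enumeration shows that the $\mathrm{GL}(3,\mathbb{Z}_2)$-orbits of nondegenerate colorings on $P^3(3)$ are represented exactly by $\lambda_1,\lambda_2,\lambda_3,\lambda_4$. One also checks directly that each of the six operations $\sharp^v,\sharp^e,\sharp^{eve},\natural,\sharp^\triangle,\sharp^\copyright$ preserves the defining property of $\mathcal{P}$ (nondegeneracy at every vertex), which yields the inclusion of the generated algebraic system into $\mathcal{P}$.

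For the inductive step with $m\geq 6$, the key combinatorial input is Euler's relation for simple $3$-polytopes: writing $p_k$ for the number of $k$-gonal facets, one has
\[
3p_3+2p_4+p_5=12+\sum_{k\geq 7}(k-6)\,p_k,
\]
so $P^3$ must contain at least one facet with $3$, $4$, or $5$ sides. Such a small facet $F$ gives a local combinatorial configuration along which we attempt to run the inverse of one of the generating operations. For instance, a triangular facet $F$ on which the three adjacent facet-colorings are linearly independent presents $(P^3,\lambda)$ as a $\sharp^\triangle$-sum at $F$; a quadrilateral with suitable colorings on opposite adjacent facets yields an inverse $\sharp^e$ or $\sharp^{eve}$; and appropriate vertex-type or pentagonal configurations give an inverse $\sharp^v$ or $\sharp^\copyright$. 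In each case the resulting pair(s) lie in $\mathcal{P}$ and have strictly fewer facets, so the inductive hypothesis applies.

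The principal obstacle, and the heart of the argument, is that the local coloring near the chosen small facet need not fit the pattern required by any \emph{pure} inverse connected-sum operation: adjacent facets might be forced to carry coincident colors, or linear dependencies might persist around $F$. In such exceptional configurations one first applies one or more $\natural$ moves, which preserve the underlying polytope and modify $\lambda$ only on a controlled local patch (as suggested by Figure~\ref{p:Dehn}), in order to reshape the local colorings into a reducible configuration; afterwards an inverse $\sharp^{*}$ can be performed. Carrying this out rigorously requires enumerating the possible local patterns of $\lambda$ around each type of small facet and verifying in each case that some finite sequence of $\natural$ moves followed by an inverse connected sum delivers a valid reduction. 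This finite but intricate case analysis, paired with the routine verification that the six operations are closed in $\mathcal{P}$, completes the induction and establishes both inclusions.
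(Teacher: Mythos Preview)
The paper under review is a survey and does not supply its own proof of this theorem; the result is simply quoted from L\"u--Yu~\cite{LY} together with pictures of the six operations. So there is no ``paper's proof'' to compare against, and your proposal must be assessed on its own terms and against the original source.

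Your overall architecture---induction on the number $m$ of facets, with Euler's relation $3p_3+2p_4+p_5=12+\sum_{k\geq 7}(k-6)p_k$ forcing a small $2$-face and then undoing one of the six operations to reduce $m$---is indeed the strategy of~\cite{LY}. The base cases ($m=4$ giving $\Delta^3$, $m=5$ giving $P^3(3)$ with the four $\mathrm{GL}(3,\mathbb{Z}_2)$-orbits $\lambda_1,\dots,\lambda_4$) are also correct.

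There is, however, a concrete error in your description of $\natural$. You write that $\natural$ ``preserve[s] the underlying polytope and modif[ies] $\lambda$ only on a controlled local patch.'' That is not what $\natural$ does: as Figure~\ref{p:Dehn} indicates (and as the Izmestiev/L\"u--Yu usage confirms), $\natural$ is a genuine combinatorial Dehn-type move that alters the face structure of $P^3$---it collapses one edge and introduces a transverse one, so the polytope changes even though the number of facets does not. Consequently your plan to use $\natural$ as a pure ``recoloring gadget'' that leaves $P^3$ fixed while adjusting $\lambda$ near a stubborn small face is not available, and the reductions in the exceptional cases have to be organized differently (in~\cite{LY} this is exactly where the additional operations $\sharp^\triangle$ and $\sharp^\copyright$, and the auxiliary prism pieces, come into play). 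Apart from this, your sketch omits the substantive case analysis---which is the real content of the theorem---so as written it is an outline rather than a proof.
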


   \begin{figure}
         \includegraphics[width=0.86\textwidth]{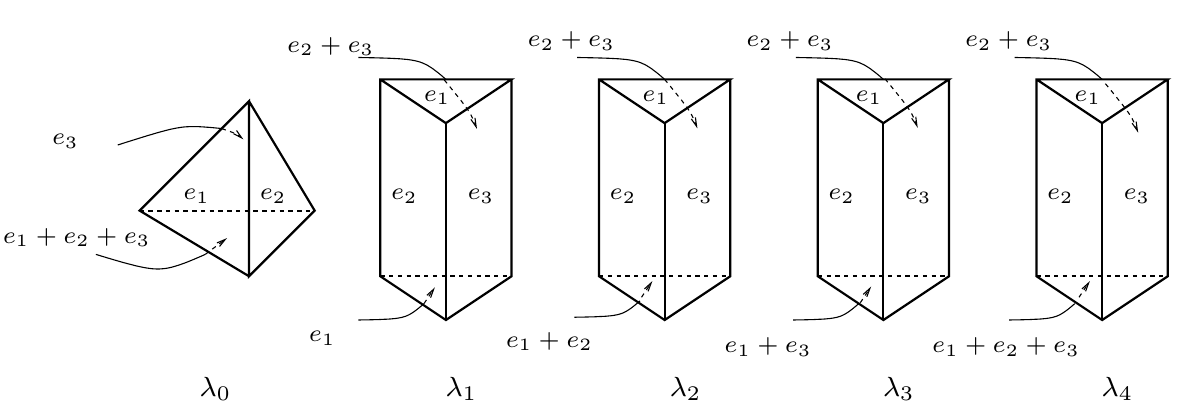}\\
          \caption{The basic pairs where $\{e_1,e_2,e_3\}$
           is the standard basis of $(\mathbb{Z}_2)^3$}\label{p:basic-pair}
      \end{figure}

The six operations $\sharp^v,
\sharp^e$, $\sharp^{eve}$, $\natural$, $\sharp^\triangle$,
$\sharp^\copyright$ on $\mathcal{P}$ are shown below (cf. \cite{LY}).
\begin{itemize}
 \item Operation $\sharp^v$ on $\mathcal{P}$
\[
\includegraphics[scale=0.9]{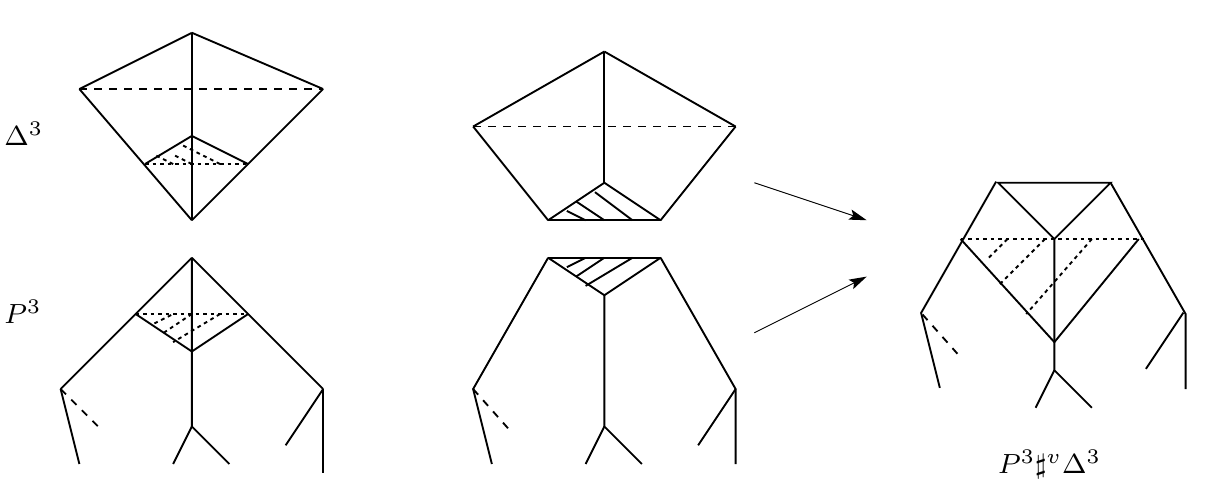}
\]
 \item Operation $\sharp^e$ on $\mathcal{P}:$
\[
\includegraphics[scale=0.9]{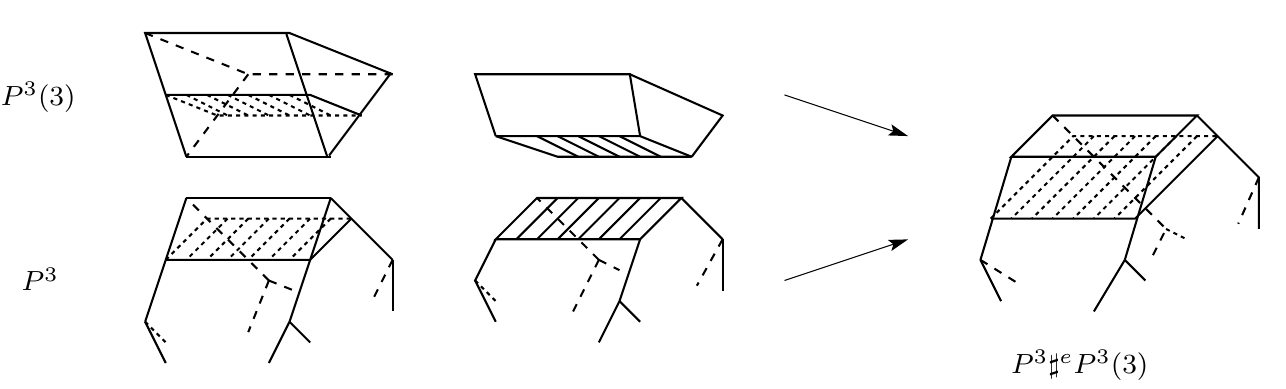}
\]
 \item Operation $\sharp^{eve}$ on $\mathcal{P}:$
\[
\includegraphics[scale=1.06]{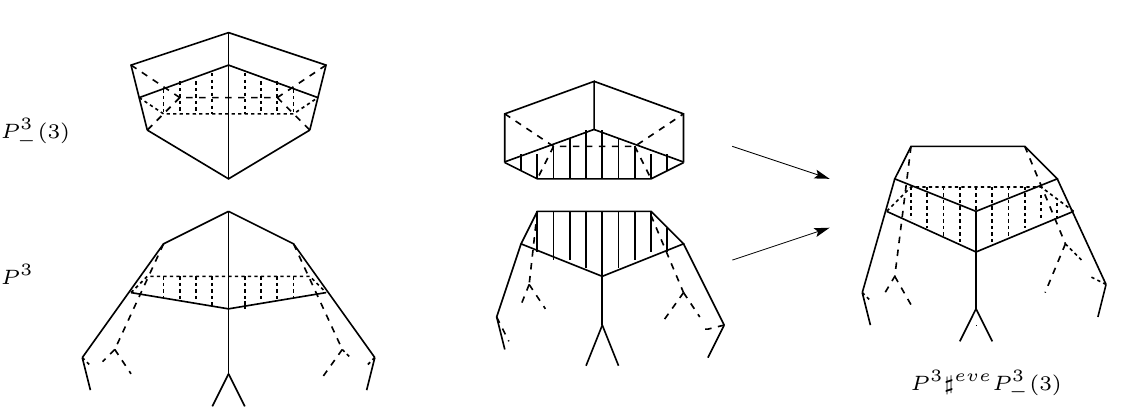}
\]
Here $P^3_{-}(3)$ is obtained by cutting a vertex from the triangular prism $P^3(3)$.
 \item Operation $\natural$ on $\mathcal{P}:$
\[
\includegraphics[scale=1]{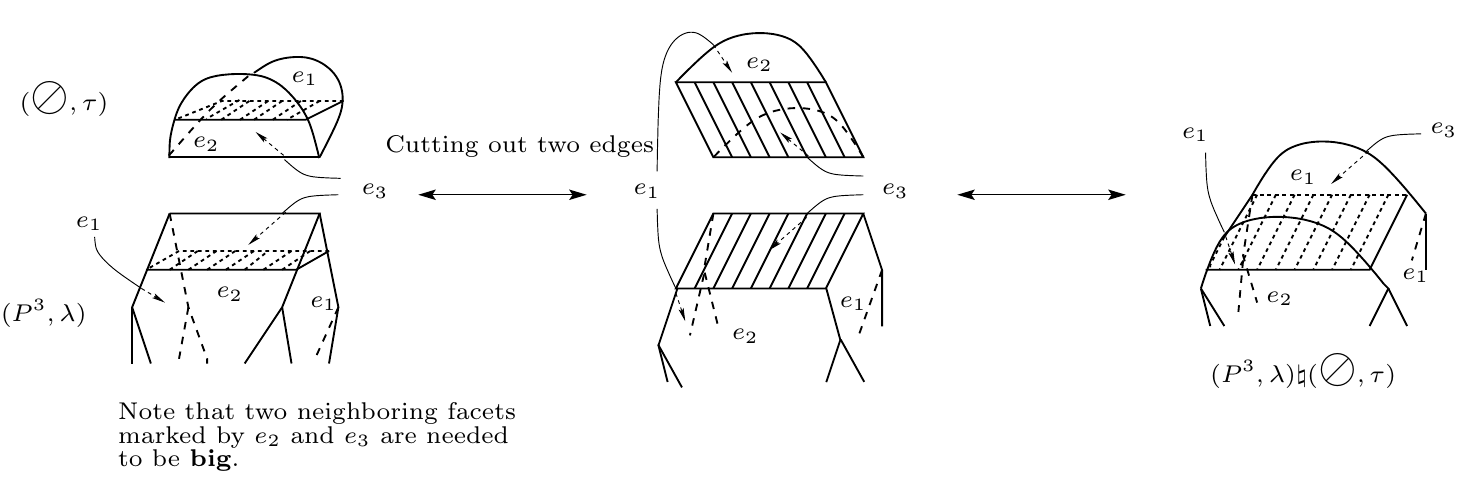}
\]
\item Operation $\sharp^\triangle$ on $\mathcal{P}:$
\[
\includegraphics[scale=1.15]{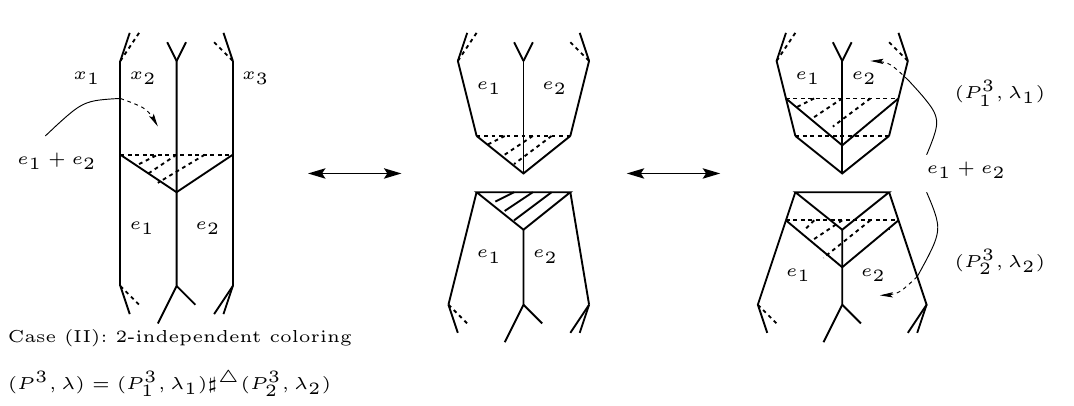}
\]
\item Operation $\sharp^\copyright$ on $\mathcal{P}:$
\[
\includegraphics[scale=1.0]{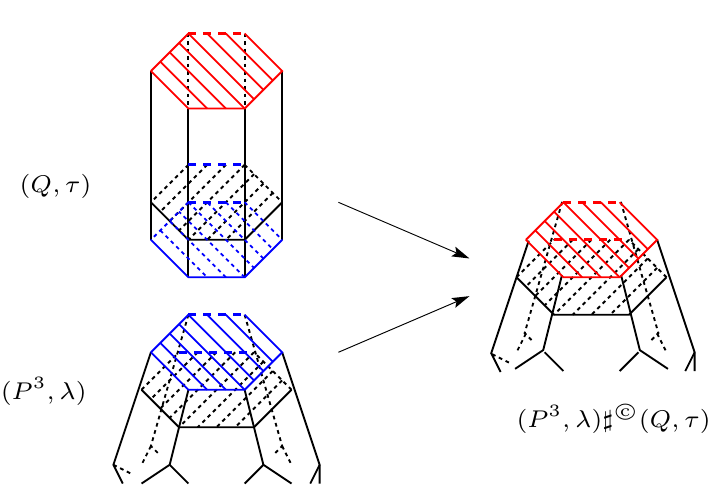}
\]
\end{itemize}

By the construction of small covers from pairs in $\mathcal{P}$,
  we have the following operations on $\mathcal{M}$ corresponding to
  $\sharp^v, \sharp^e,
	\sharp^{eve}, \natural, \sharp^\triangle, \sharp^\copyright$.
\[\sharp^v, \sharp^e,
	\sharp^{eve}, \natural, \sharp^\triangle, \sharp^\copyright \text{
		on } \mathcal{P}\longleftrightarrow \widetilde{\sharp^v},
	\widetilde{\sharp^e}, \widetilde{\sharp^{eve}},
	\widetilde{\natural}, \widetilde{\sharp^\triangle},
	\widetilde{\sharp^\copyright} \text{ on } \mathcal{M}
\]
\begin{itemize}
 \item	$\widetilde{\sharp^v}$ is the equivariant connected sum.\vskip .1cm

 \item $\widetilde{\natural}$ is
  equivariant $\frac{0}{1}$-type Dehn surgery.\vskip .1cm

 \item $\widetilde{\sharp^e}, \widetilde{\sharp^{eve}},
		\widetilde{\sharp^\triangle}, \widetilde{\sharp^\copyright}$ are some
		equivariant cut-and-paste operations which can be
		understood as the generalized equivariant connected sums.
\end{itemize}		

For the generators of $\mathcal{M}$, we take
		$M(\Delta^3,\sigma\circ\lambda_0)$ and
		$M(P^3(3),\sigma\circ\lambda_i) (i=1,...,4), \sigma\in \text{\rm
			GL}(3,\mathbb{Z}_2)$,  which give all elementary generators of
		the algebraic system $\langle\mathcal{M}; \widetilde{\sharp^v}$,
		$\widetilde{\sharp^e}$, $\widetilde{\sharp^{eve}}$,
		$\widetilde{\natural}$, $\widetilde{\sharp^\triangle}$,
		$\widetilde{\sharp^\copyright}\rangle$.
The topological types of these generators are as follows:
\begin{itemize}
\item $M(\Delta^3,\lambda_0)\approx\mathbb{R}P^3 \text{ with the canonical linear
			$(\mathbb{Z}_2)^3$-action}$
		\item
		$M(P^3(3),\lambda_i)(i=1,...,4)\approx S^1\times\mathbb{R}P^2 \text{ with four
			different
			$(\mathbb{Z}_2)^3$-actions}$
		\end{itemize}
So we have the Lickorish type construction of all $3$-dimensional small covers.
\begin{thm}[L\"u-Yu~\cite{LY}] \label{Thm:Lu-Yu}
  All the $3$-dimensional small covers form an algebraic system with generators
		$\mathbb{R}P^3$ and $S^1\times \mathbb{R}P^2$ with certain
		$(\mathbb{Z}_2)^3$-actions and six operations
		$\widetilde{\sharp^v}, \widetilde{\sharp^e},
		\widetilde{\sharp^{eve}}, \widetilde{\natural},
		\widetilde{\sharp^\triangle}, \widetilde{\sharp^\copyright}$.
\end{thm}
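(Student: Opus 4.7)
The plan is to deduce Theorem~\ref{Thm:Lu-Yu} from its combinatorial counterpart (the preceding theorem on the algebraic system structure of $\mathcal{P}$) by transporting along the Davis--Januszkiewicz bijection $\mathcal{P}\leftrightarrow\mathcal{M}$, $(P^3,\lambda)\mapsto M(P^3,\lambda)$. The key point is that this bijection is \emph{functorial} with respect to the six polytope-level operations: each of $\sharp^v,\sharp^e,\sharp^{eve},\natural,\sharp^\triangle,\sharp^\copyright$ is a cut-and-paste operation supported on a small neighborhood of a vertex, edge, triangle, or configuration thereof, and hence lifts canonically to an equivariant cut-and-paste on the corresponding small covers, once one checks that the pieces being glued have matching boundaries as $(\mathbb{Z}_2)^3$-spaces.

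First I would verify the claim on generators. Using the formula $M(P^n,\mu)=P^n\times(\mathbb{Z}_2)^n/\!\sim$, a direct computation shows $M(\Delta^3,\lambda_0)\approx\mathbb{R}P^3$ with the canonical linear $(\mathbb{Z}_2)^3$-action, and $M(P^3(3),\lambda_i)\approx S^1\times\mathbb{R}P^2$ for $i=1,\dots,4$, the four different $(\mathbb{Z}_2)^3$-actions arising from the four distinct non-equivalent colorings of the triangular prism listed in Figure~\ref{p:basic-pair}. Composition with $\sigma\in\mathrm{GL}(3,\mathbb{Z}_2)$ on the coloring side corresponds to post-composing the $(\mathbb{Z}_2)^3$-action by the automorphism $\sigma$, so all $\sigma$-twists of these generators on the combinatorial side are realized on the topological side.

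Next, for each combinatorial operation I would define the matching equivariant operation and check compatibility. For $\sharp^v$, cutting off a vertex $v$ of $P^3$ corresponds (via $\pi_\mathbb{R}^{-1}$) to excising an equivariant $(\mathbb{Z}_2)^3$-invariant disk neighborhood of a free $(\mathbb{Z}_2)^3$-orbit in $M(P^3,\lambda)$; gluing the two polytopes along the new triangular facets corresponds to gluing along their preimages, which are $(\mathbb{Z}_2)^3$-equivariantly identified precisely when the colorings agree on the glued facets. This is the equivariant connected sum $\widetilde{\sharp^v}$. The operation $\widetilde{\natural}$ arises from the $a=1$ flip surgery already written down in \S4 restricted to edges with colorings matching Figure~\ref{p:Dehn}, giving an equivariant $\frac{0}{1}$-type Dehn surgery. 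The operations $\widetilde{\sharp^e}$, $\widetilde{\sharp^{eve}}$, $\widetilde{\sharp^\triangle}$, $\widetilde{\sharp^\copyright}$ are defined analogously as the lifts of the corresponding polytope operations, each being a generalized equivariant connected sum along the preimage of the region being modified. In every case the compatibility identity $M\bigl(\mathcal{O}((P_1,\lambda_1),(P_2,\lambda_2))\bigr)\cong\widetilde{\mathcal{O}}\bigl(M(P_1,\lambda_1),M(P_2,\lambda_2)\bigr)$ follows from the fact that $M(\cdot,\cdot)$ is a pushout-type functor that commutes with gluings along faces whose colorings are matched.

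Granting these lifts, the theorem follows immediately: any $M\in\mathcal{M}$ equals $M(P^3,\lambda)$ for a unique $(P^3,\lambda)\in\mathcal{P}$, the previous theorem writes $(P^3,\lambda)$ as an iterated $\sharp^v,\sharp^e,\sharp^{eve},\natural,\sharp^\triangle,\sharp^\copyright$-composition of the generators $(\Delta^3,\sigma\circ\lambda_0)$ and $(P^3(3),\sigma\circ\lambda_i)$, and applying $M(\cdot,\cdot)$ to this expression yields the desired Lickorish type construction of $M$ using $\widetilde{\sharp^v},\widetilde{\sharp^e},\widetilde{\sharp^{eve}},\widetilde{\natural},\widetilde{\sharp^\triangle},\widetilde{\sharp^\copyright}$ and the two listed topological generators. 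The main obstacle is the careful bookkeeping in the compatibility verification for the more elaborate operations $\sharp^{eve}$ and $\sharp^\copyright$, where the region being modified contains several faces and the matching of colorings on the gluing locus must be tracked vertex by vertex so that the resulting identification on preimages is indeed an equivariant homeomorphism of $(\mathbb{Z}_2)^3$-manifolds.
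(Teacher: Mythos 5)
Your proposal matches the paper's approach: transport the combinatorial theorem on $\mathcal{P}$ through the Davis--Januszkiewicz bijection, verify that the six polytope operations lift to equivariant cut-and-paste operations on the small covers, and identify the topological types of the generators as $\mathbb{R}P^3$ and $S^1\times\mathbb{R}P^2$ by direct computation. One small correction: $\widetilde{\natural}$ is not the restriction of the $a=1$ flip surgery from Section 4 (that one cuts off a vertex and gives equivariant connected sum); rather $\natural$ is a distinct combinatorial move along an edge (Figure~\ref{p:Dehn}) whose lift is the equivariant $\tfrac{0}{1}$-type Dehn surgery, but this misattribution does not affect the structure of your argument.
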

	 In addition, Kuroki \cite{Kuroki}
	studied the relations among  six operations $\sharp^v,
		\sharp^e$, $\sharp^{eve}$, $\natural$, $\sharp^\triangle$,
		$\sharp^\copyright$ on $\mathcal{P}$ and found that
		$\sharp^e = \natural \circ(\sharp^vP^3(3)) $ and $\sharp^{eve} =
		\natural^2 \circ (\sharp^vP^3_{-}(3)) $.
 Furthermore, Nishimura~\cite{Nishimura} discovered more
 relations among the operations in Theorem~\ref{Thm:Lu-Yu} and
 obtained another algebraic system by the following theroem,
 which improved Theorem~\ref{Thm:Lu-Yu}.
		\begin{thm}[Y. Nishimura~\cite{Nishimura}] \label{Thm:Nishimura}
			All the $3$-dimensional small covers form an algebraic system with  generators
			$T^3, \mathbb{R}P^3$ and $S^1\times \mathbb{R}P^2$ with certain
			$(\mathbb{Z}_2)^3$-actions and two operations
			$\widetilde{\sharp^v},  \widetilde{\natural}$.
		\end{thm}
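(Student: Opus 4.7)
The plan is to deduce this result from the L\"u--Yu theorem (Theorem~\ref{Thm:Lu-Yu}) by showing that the four extra operations $\widetilde{\sharp^e}$, $\widetilde{\sharp^{eve}}$, $\widetilde{\sharp^\triangle}$, $\widetilde{\sharp^\copyright}$ are redundant once $T^3$ (with a suitable canonical $(\mathbb{Z}_2)^3$-action) is admitted as a third generator alongside $\mathbb{R}P^3$ and $S^1\times\mathbb{R}P^2$. Since Theorem~\ref{Thm:Lu-Yu} already proves that the six operations with the original two generators produce every $3$-dimensional small cover, it suffices to realize each of the four targeted operations as a composition involving only $\widetilde{\sharp^v}$ and $\widetilde{\natural}$ together with the enlarged list of generators.

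First I would dispatch $\widetilde{\sharp^e}$ and $\widetilde{\sharp^{eve}}$ using Kuroki's combinatorial identities cited just before the theorem, namely $\sharp^e=\natural\circ(\sharp^v P^3(3))$ and $\sharp^{eve}=\natural^2\circ(\sharp^v P^3_{-}(3))$. Lifting these identities from the combinatorial level $\mathcal{P}$ to the topological level $\mathcal{M}$ through the Davis--Januszkiewicz correspondence is routine because every operation in $\mathcal{P}$ was constructed precisely so that its equivariant lift is the correspondingly decorated operation on the small cover. Note that $P^3_{-}(3)$ is obtained from $P^3(3)$ by truncating a vertex, and its small cover is in turn obtained from $S^1\times\mathbb{R}P^2$ by a single $\widetilde{\sharp^v}$ with a generator, so the right-hand sides live entirely inside the system generated by $\mathbb{R}P^3$, $S^1\times\mathbb{R}P^2$ under $\widetilde{\sharp^v}$ and $\widetilde{\natural}$.

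Next I would handle $\widetilde{\sharp^\triangle}$ and $\widetilde{\sharp^\copyright}$. This is where the new generator $T^3=M(I^3,\lambda_0)$ enters: the idea is to find a combinatorial identity, analogous to Kuroki's, of the schematic form $\sharp^\triangle=\natural^{k}\circ (\sharp^v I^3)$ (and similarly for $\sharp^\copyright$) at the level of pairs $(P^3,\lambda)$. Concretely, one performs a $\sharp^v$ with the $3$-cube (equipped with an appropriate $\sigma\circ\lambda_0$-coloring) at a well-chosen vertex and then applies a sequence of $\natural$-moves, arranged so that the resulting coloring on the modified polytope agrees edge-by-edge with the one produced by the original $\sharp^\triangle$ or $\sharp^\copyright$. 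Once the identity holds at the combinatorial level, it automatically lifts to equivariant homeomorphism of small covers, so $\widetilde{\sharp^\triangle}$ and $\widetilde{\sharp^\copyright}$ are expressible in terms of $\widetilde{\sharp^v}$ and $\widetilde{\natural}$ applied to the enlarged generator set.

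The main obstacle is the last step: finding the explicit decomposition of $\sharp^\triangle$ and $\sharp^\copyright$ through $I^3$ and verifying that the characteristic functions match throughout the sequence. These operations are multi-vertex cut-and-paste surgeries, and a priori one must track how the coloring evolves after each $\natural$-move along a carefully chosen edge, making sure that the intermediate polytopes remain simple and that the auxiliary $GL(3,\mathbb{Z}_2)$-transformations in the generators line up correctly at the gluing sites. This is essentially a delicate but finite case analysis; once it is completed, combining the four decompositions with Theorem~\ref{Thm:Lu-Yu} immediately yields the claim.
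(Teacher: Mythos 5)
Your overall strategy — start from Theorem~\ref{Thm:Lu-Yu}, invoke Kuroki's identities to kill $\widetilde{\sharp^e}$ and $\widetilde{\sharp^{eve}}$, and then hunt for analogous relations expressing $\widetilde{\sharp^\triangle}$ and $\widetilde{\sharp^\copyright}$ through $\widetilde{\sharp^v}$ and $\widetilde{\natural}$ once $T^3$ is admitted — is in fact the route the survey attributes to Nishimura (``discovered more relations among the operations''), so there is no mismatch of approach. The first half is fine: $\sharp^e = \natural\circ(\sharp^v P^3(3))$ and $\sharp^{eve} = \natural^2\circ(\sharp^v P^3_-(3))$ do reduce those two operations, and your observation that $M(P^3_-(3),\tau)$ itself is a $\widetilde{\sharp^v}$ of $S^1\times\mathbb{R}P^2$ with $\mathbb{R}P^3$ is correct.

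The genuine gap is exactly where you flag it, and it is not a mere verification detail that can be deferred. You never produce the relations $\sharp^\triangle = (\text{word in }\sharp^v, \natural \text{ using } I^3)$ and $\sharp^\copyright = (\text{word in }\sharp^v, \natural\text{ using } I^3)$; you only guess a schematic shape ``$\natural^k\circ(\sharp^v I^3)$''. Even that shape is not obviously well-typed: $\sharp^\triangle$ and $\sharp^\copyright$ are two-input gluing operations (along a triangular face, respectively a more complicated pattern), whereas $\natural^k\circ(\sharp^v I^3)$ as written takes a single input, so at minimum you must specify where the second argument enters, at which vertex of $I^3$ the $\sharp^v$ is performed, with which coloring $\sigma\circ\lambda_0$, and along which edges the successive $\natural$-moves are applied — and then actually check that the resulting characteristic function agrees facet-by-facet with that of $P_1\sharp^\triangle P_2$ (resp.\ $\sharp^\copyright$). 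Without exhibiting these relations and matching the colorings, the reduction from six operations to two is asserted but not proved; the entire content of Nishimura's improvement over Theorem~\ref{Thm:Lu-Yu} lies in precisely that case analysis, which your proposal leaves open.
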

		So by Theorem~\ref{Thm:Nishimura}, we can obtain any $3$-dimensional small cover
		 by connected sum and some special kind of
		 Dehn surgeries. 
\vskip .2cm
It should be pointed out that Nishimura in~\cite{N}  also gave Lickorish type construction for all orientable 3-dimensional small covers as follows: all orientable 3-dimensional small
covers are obtained from $\mathbb{R}P^3$ and $T^3$ by using the equivariant connected sum and  equivariant ${0\over 1}$-type Dehn surgery.

\begin{rem}
In this section, we see in the "lowest level", there are plenty results similar to Theorem \ref{LickorishThm}. Many
``Equivariant Dehn surgeries" admit more concrete correspond to their combinatorial surgeries.

\vskip .2cm

Furthermore,  in many cases, one could reduce the number of generators and the number of specific types of
equivariant surgeries as less as possible.
In particular, according to \cite{LY}, we could see these six operations (equivariant-surgeries) are very concrete and easy to handle in the
combinatorial level as well as the manifold level (lowest level).
\end{rem}

\section{Lickorish type problems in equivariant cobordisms}
Recall that two smooth closed $n$-manifolds $M_1$ and $M_2$ are bordant if their disjoint union are the boundary of some $n+1$ manifold. One knows that if $M_1$ is bordant to $M_2$, $M_2$ can be obtained from
$M_1$ by a finite steps of surgeries, which is also a kind of Lickorish type construction. Therefore, in cobordism,
the idea of Lickorish type construction may provide a good point of view to discuss the manifolds in the same bordism classes.
\vskip .2cm

In equivariant case, questions become more difficult. Roughly speaking, let $G$ be a compact Lie group, two $G$-manifolds $M_1$ and $M_2$ are $G$-equivariant bordant if there exists a $G$-manifolds $W$ with boundary $M_1\sqcup M_2$ such that their $G$-structures are equivalent. We are interested in the equivariant cobordism classification problems of $G$-manifolds.
\vskip .2cm
We can apply the preceding discussion of the Lickorish type construction of small covers
to study their equivariant cobordism classification. Define
 $\widehat{\mathcal{M}}$ to be the set consisting of  equivariant unoriented
cobordism classes of all $3$-dimensional small covers. 	
Since $\widehat{\mathcal{M}}$ forms an abelian group under disjoint union,
 we can think of $\widehat{\mathcal{M}}$ as a vector space over $\mathbb{Z}_2$.
\vskip .2cm

Let $[M(P_1^3, \lambda_1)]$ and $[M(P_2^3, \lambda_2)]$ be two
classes in $\widehat{\mathcal{M}}$ where $\lambda_i$ is a
characteristic function on a $3$-dimensional simple polytope $P^3_i$, $i=1,2$. From L\"u-Yu~\cite{LY}, we know
\begin{align*} &[M(P_1^3,
\lambda_1)\widetilde{\sharp^v} M(P_2^3, \lambda_2)]=[M(P_1^3,
\lambda_1)]+[M(P_2^3, \lambda_2)]\\
&[M(P^3,\lambda)\widetilde{\sharp^e}M(P^3(3),
\tau)]=[M(P^3,\lambda)]+[M(P^3(3), \tau)]\\
&[M(P^3,\lambda)\widetilde{\sharp^{eve}}M(P^3_-(3),
\tau)]=[M(P^3,\lambda)]+[M(P^3_-(3), \tau)]\\
\end{align*}
\begin{align*}
&[M(P^3,\lambda)\widetilde{\natural}M(\text{\large $\oslash$},
\tau)]=[M(P^3,\lambda)]\\
&[M(P^3,\lambda)\widetilde{\sharp^\copyright}M(P^3(i),
\tau)]=[M(P^3,\lambda)]+[M(P^3(i), \tau)], i=3,4,5.\\
&[M(P_1^3, \lambda_1)]\widetilde{\sharp^\triangle}[M(P_2^3,
\lambda_2)]\\=&\begin{cases} [M(P_1^3, \lambda_1)]+[M(P_2^3,
\lambda_2)] \\
\text{or } [M(P_1^3, \lambda_1)]+[M(P_2^3,
\lambda_2)]+[M(P^3(3),\lambda_1\sharp^\triangle\lambda_2)].
\end{cases}
\end{align*}
By the above discussion, it is easy to see that
 the abelian group $\widehat{\mathcal{M}}$ is generated by
 some small covers over $\Delta^3$ and $P^3(3)$.
			
\begin{prop}[Equivariant cobordism classification of $3$-dim small covers]
The abelian group
		$\widehat{\mathcal{M}}$ is generated by classes of $\mathbb{R}P^3$
		and $S^1\times \mathbb{R}P^2$ with certain
		$(\mathbb{Z}_2)^3$-actions.
\end{prop}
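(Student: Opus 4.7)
The plan is to prove the proposition by induction on the number of combinatorial operations $\sharp^v, \sharp^e, \sharp^{eve}, \natural, \sharp^{\triangle}, \sharp^{\copyright}$ required to build a pair $(P^3,\lambda) \in \mathcal{P}$ from the basic generators of Theorem~\ref{Thm:Lu-Yu}, and then read off the cobordism consequence from the displayed identities in $\widehat{\mathcal{M}}$ that precede the statement.

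First I would set up the base case. By Theorem~\ref{Thm:Lu-Yu}, every pair $(P^3,\lambda) \in \mathcal{P}$ is obtained from the basic pairs $(\Delta^3, \sigma\circ \lambda_0)$ and $(P^3(3), \sigma\circ \lambda_i)$ ($i=1,\dots,4$, $\sigma \in \mathrm{GL}(3,\mathbb{Z}_2)$) by finitely many of the six operations. As listed at the end of the preceding section, the corresponding small covers are exactly $\mathbb{R}P^3$ (with its canonical linear $(\mathbb{Z}_2)^3$-action) and $S^1 \times \mathbb{R}P^2$ (with four different $(\mathbb{Z}_2)^3$-actions). Thus the cobordism classes of all generators already lie in the subgroup $V \subset \widehat{\mathcal{M}}$ generated by $[\mathbb{R}P^3]$ and $[S^1 \times \mathbb{R}P^2]$ with the listed $(\mathbb{Z}_2)^3$-actions, which establishes the base case.

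For the inductive step, suppose $M(P^3,\lambda)$ is produced from small covers whose classes already lie in $V$ by one of the six operations, and read off the effect in $\widehat{\mathcal{M}}$ from the bordism identities above the proposition. The identities for $\widetilde{\sharp^v}$, $\widetilde{\sharp^e}$, $\widetilde{\sharp^{eve}}$, $\widetilde{\natural}$ and $\widetilde{\sharp^{\copyright}}$ each express the resulting cobordism class as a sum of classes that by induction lie in $V$, together possibly with a class of a small cover over $\Delta^3$ or $P^3(3)$, which also lies in $V$. The only case that needs a moment of care is $\widetilde{\sharp^{\triangle}}$, since there the bordism identity may include an extra summand $[M(P^3(3), \lambda_1 \sharp^{\triangle} \lambda_2)]$; but this extra summand is again the class of a small cover over $P^3(3)$, hence is of the form $[S^1 \times \mathbb{R}P^2]$ for some $(\mathbb{Z}_2)^3$-action and so lies in $V$. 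Combining the base case and the inductive step proves that every class in $\widehat{\mathcal{M}}$ is a $\mathbb{Z}_2$-linear combination of classes of $\mathbb{R}P^3$ and $S^1 \times \mathbb{R}P^2$ with appropriate $(\mathbb{Z}_2)^3$-actions, as claimed.

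The one point that most deserves attention, and which I expect to be the main subtlety rather than a serious obstacle, is bookkeeping the precise $(\mathbb{Z}_2)^3$-action on each generator that appears as one traces the induction: the generators of $V$ are $\mathbb{R}P^3$ and $S^1 \times \mathbb{R}P^2$ \emph{with specified equivariant structures}, not just their underlying manifolds, so one must verify that every auxiliary pair $(P^3(3),\tau)$ or $(\Delta^3,\tau)$ arising in the identities is indeed equivariantly cobordant to some small cover in the prescribed generator list under a $\mathrm{GL}(3,\mathbb{Z}_2)$ change of characteristic function. This is taken care of by the explicit description of the basic pairs in Figure~\ref{p:basic-pair}, after which the induction closes and the proposition follows.
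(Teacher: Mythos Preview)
Your approach is essentially the paper's: the proposition is stated there immediately after the list of cobordism identities, with the one-line justification that ``by the above discussion'' $\widehat{\mathcal{M}}$ is generated by small covers over $\Delta^3$ and $P^3(3)$. Your explicit induction on the length of a Lickorish-type word just spells that out.

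One small inaccuracy to fix: you assert that the auxiliary summands arising from the identities are always small covers over $\Delta^3$ or $P^3(3)$. That is not literally true. For $\widetilde{\sharp^{eve}}$ the extra term is $[M(P^3_-(3),\tau)]$, and for $\widetilde{\sharp^{\copyright}}$ it is $[M(P^3(i),\tau)]$ with $i=3,4,5$; the polytopes $P^3_-(3)$, $P^3(4)$, $P^3(5)$ are not among the basic generators. This does not break the argument, since each of these finitely many pairs is itself obtained from the basic pairs by a bounded number of operations (e.g.\ $P^3_-(3)=\Delta^3\sharp^v P^3(3)$, and $P^3(4)$, $P^3(5)$ via short $\sharp^e$/$\natural$ sequences on prisms), so their classes lie in $V$ after finitely many preliminary applications of the same identities. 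Once you handle these finitely many auxiliary polytopes up front, your induction closes exactly as written.
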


It is known  in~\cite{Lu} that as a $\mathbb{Z}_2$-vector space, $\widehat{\mathcal{M}}$ has dimension 13.
\vskip .2cm

Similarly, for quasitoric manifolds we would like to study the following problem.
\begin{prob}
	Give a Lickorish type construction for all $6$-dimensional quasitoric manifolds with simple generators
	and simple operators,
	and compute the equivariant cobordism group.
\end{prob}

\begin{prob}
	Find some simple Lickorish type constructions between two quasitoric manifolds  if they are equivariantly bordant.
\end{prob}

\vskip .2cm

In this survey, we've introduced and restated many theorems in the ``Lickorish type'' style. In our point of view, Lickorish's original idea is to use less generators and finite ``easy'' operators to construct all the objects. We are happy to find many ``Lickorish type'' combinatorial and geometric objects and we hope that these ``Lickorish type'' style theorems could provide another point of view to understand combination, geometry and topology.


\begin{thebibliography}{99}
\bibitem{BM}
  F.~Bosio and L.~Meersseman, \emph{Real quadrics in $\mathbb{C}^n$,
     complex manifolds and convex polytopes}, Acta Math. 197 (2006), 53--127.

 \bibitem{BP1}
  V. M. Buchstaber and T.E. Panov,
 \emph{Torus actions and their applications in topology and
 combinatorics}, University Lecture Series, \textbf{24}.
 American Mathematical Society, Providence, RI, 2002.

\bibitem{DJ}  M.~Davis and T.~Januszkiewicz, \textit{Convex polytopes,
Coxeter orbifolds and torus actions}, Duke Math. J. \textbf{62}
(1991), no.\textbf{2}, 417--451.

\bibitem{Ewald78}
G.~Ewald, $\mathrm{\ddot{U}ber}$ stellare  $\mathrm{\ddot{A}quivalenz}$ konvexer Polytope, Resultate Math. 1 (1978), no.~\textbf{1}, 54--60.

\bibitem{Izmestiev}
I. V. Izmestiev, \emph{Three-dimensional manifolds defined by a coloring of the faces of a simple polytope},
Math. Notes 69 (2001), no. \textbf{3-4}, 340--346.

\bibitem{Kuroki}
S.~Kuroki, \emph{Operations on 3-dimensional small covers},
   Chinese Ann. Math. Ser. B, 31 (2010), no.~\textbf{3}, 393--410.

\bibitem{Kuroki16}
S.~Kuroki, \emph{An Orlik-Raymond type classification of simply connected 6-dimensional torus manifolds with vanishing odd-degree cohomology}, Pacific J. Math. 280 (2016), no.~\textbf{1}, 89--114

\bibitem{Lickorish}
W.~B.~A.~Lickorish, \emph{A representation of orientable combinatorial 3-manifolds}, Ann. Math. 76 (1962), no.~\textbf{3}, 531--540.

\bibitem{Lu} Zhi L\"u, \emph{ 2-torus manifolds, cobordism and small covers},  Pacific J. Math. 241 (2009), no. ~\textbf{2}, 285--308.

\bibitem{LY}
 Zhi L\"u and Li Yu, \emph{Topological types of 3-dimensional small covers}, Forum Math. 23 (2011), no.~\textbf{2}, 245--284.



\bibitem{Nishimura}
Y.~Nishimura,
\emph{Combinatorial constructions of three-dimensional small covers},
Pacific. J. Math. 256 (2011), no.~\textbf{1}, 177--199.

\bibitem{N} Y.~Nishimura, \emph{Equivariant surgeries of small covers} (Japanese), RIMS Kokyuroku
1393 (2004), 44--47.

\bibitem{OR}
P.~Orlik and F.~Raymond, \emph{Actions of the torus on 4-manifolds} I, Trans. Amer. Math. Soc. 152 (1970), 531-559.

\bibitem{Pa87}
U.~Pachner, \emph{Konstruktionsmethoden und das kombinatorische Hom\"oomorphieproblem f\"ur
Triangulationen kompakter semilinearer Mannigfaltigkeiten},
 Abh. Math. Sem. Univ. Hamburg 57 (1987), 69--86.

\bibitem{Pa91}
U.~Pachner. \emph{P.L. homeomorphic manifolds are equivalent by elementary shellings}, European J. Combin. 12 (1991), no.~\textbf{2}, 129--145.

\end{thebibliography}
\end{document}